\newtheorem{thm}[equation]{Theorem}
\numberwithin{equation}{section}
\newtheorem{cor}[equation]{Corollary}
\newtheorem{lem}[equation]{Lemma}
\newtheorem{defin}[equation]{Definition}
\newtheorem{prop}[equation]{Proposition}
\begin{document}
\raggedbottom \voffset=-.7truein \hoffset=0truein \vsize=8truein
\hsize=6truein \textheight=8truein \textwidth=6truein
\baselineskip=18truept

\def\mapright#1{\ \smash{\mathop{\longrightarrow}\limits^{#1}}\ }
\def\mapleft#1{\smash{\mathop{\longleftarrow}\limits^{#1}}}
\def\mapup#1{\Big\uparrow\rlap{$\vcenter {\hbox {$#1$}}$}}
\def\mapdown#1{\Big\downarrow\rlap{$\vcenter {\hbox {$\ssize{#1}$}}$}}
\def\mapne#1{\nearrow\rlap{$\vcenter {\hbox {$#1$}}$}}
\def\mapse#1{\searrow\rlap{$\vcenter {\hbox {$\ssize{#1}$}}$}}
\def\mapr#1{\smash{\mathop{\rightarrow}\limits^{#1}}}
\def\ss{\smallskip}
\def\ar{\arrow}
\def\vp{v_1^{-1}\pi}
\def\at{{\widetilde\alpha}}
\def\sm{\wedge}
\def\la{\langle}
\def\ra{\rangle}
\def\on{\operatorname}
\def\ol#1{\overline{#1}{}}
\def\spin{\on{Spin}}
\def\cat{\on{cat}}
\def\lbar{\ell}
\def\qed{\quad\rule{8pt}{8pt}\bigskip}
\def\ssize{\scriptstyle}
\def\a{\alpha}
\def\tz{tikzpicture}
\def\bz{{\Bbb Z}}
\def\Rhat{\hat{R}}
\def\im{\on{im}}
\def\ct{\widetilde{C}}
\def\ext{\on{Ext}}
\def\sq{\on{Sq}}
\def\eps{\epsilon}
\def\ar#1{\stackrel {#1}{\rightarrow}}
\def\br{{\bold R}}
\def\bC{{\bold C}}
\def\bA{{\bold A}}
\def\bB{{\bold B}}
\def\bD{{\bold D}}
\def\bh{{\bold H}}
\def\bQ{{\bold Q}}
\def\bP{{\bold P}}
\def\bx{{\bold x}}
\def\bo{{\bold{bo}}}
\def\si{\sigma}
\def\Vbar{{\overline V}}
\def\dbar{{\overline d}}
\def\wbar{{\overline w}}
\def\Sum{\sum}
\def\tfrac{\textstyle\frac}
\def\tb{\textstyle\binom}
\def\Si{\Sigma}
\def\w{\wedge}
\def\equ{\begin{equation}}
\def\AF{\operatorname{AF}}
\def\b{\beta}
\def\G{\Gamma}
\def\D{\Delta}
\def\L{\Lambda}
\def\g{\gamma}
\def\k{\kappa}
\def\psit{\widetilde{\Psi}}
\def\tht{\widetilde{\Theta}}
\def\psiu{{\underline{\Psi}}}
\def\thu{{\underline{\Theta}}}
\def\aee{A_{\text{ee}}}
\def\aeo{A_{\text{eo}}}
\def\aoo{A_{\text{oo}}}
\def\aoe{A_{\text{oe}}}
\def\vbar{{\overline v}}
\def\endeq{\end{equation}}
\def\sn{S^{2n+1}}
\def\zp{\bold Z_p}
\def\cR{{\mathcal R}}
\def\P{{\mathcal P}}
\def\cF{{\mathcal F}}
\def\cQ{{\mathcal Q}}
\def\notint{\cancel\cap}
\def\cj{{\cal J}}
\def\zt{{\bold Z}_2}
\def\bs{{\bold s}}
\def\bof{{\bold f}}
\def\bq{{\bold Q}}
\def\be{{\bold e}}
\def\Hom{\on{Hom}}
\def\ker{\on{ker}}
\def\kot{\widetilde{KO}}
\def\coker{\on{coker}}
\def\da{\downarrow}
\def\colim{\operatornamewithlimits{colim}}
\def\zphat{\bz_2^\wedge}
\def\io{\iota}
\def\Om{\Omega}
\def\Prod{\prod}
\def\e{{\cal E}}
\def\zlt{\Z_{(2)}}
\def\exp{\on{exp}}
\def\abar{{\overline a}}
\def\xbar{{\overline x}}
\def\ybar{{\overline y}}
\def\zbar{{\overline z}}
\def\Rbar{{\overline R}}
\def\nbar{{\overline n}}
\def\Gbar{{\overline G}}
\def\qbar{{\overline q}}
\def\bbar{{\overline b}}
\def\et{{\widetilde E}}
\def\ni{\noindent}
\def\coef{\on{coef}}
\def\den{\on{den}}
\def\lcm{\on{l.c.m.}}
\def\vi{v_1^{-1}}
\def\ot{\otimes}
\def\psibar{{\overline\psi}}
\def\thbar{{\overline\theta}}
\def\mhat{{\hat m}}
\def\ihat{{\hat i}}
\def\jhat{{\hat j}}
\def\khat{{\hat k}}
\def\exc{\on{exc}}
\def\ms{\medskip}
\def\ehat{{\hat e}}
\def\etao{{\eta_{\text{od}}}}
\def\etae{{\eta_{\text{ev}}}}
\def\dirlim{\operatornamewithlimits{dirlim}}
\def\Gt{\widetilde{G}}
\def\lt{\widetilde{\lambda}}
\def\st{\widetilde{s}}
\def\ft{\widetilde{f}}
\def\sgd{\on{sgd}}
\def\lfl{\lfloor}
\def\rfl{\rfloor}
\def\ord{\on{ord}}
\def\gd{{\on{gd}}}
\def\rk{{{\on{rk}}_2}}
\def\nbar{{\overline{n}}}
\def\MC{\on{MC}}
\def\lg{{\on{lg}}}
\def\cB{\mathcal{B}}
\def\cS{\mathcal{S}}
\def\cP{\mathcal{P}}
\def\N{{\Bbb N}}
\def\Z{{\Bbb Z}}
\def\Q{{\Bbb Q}}
\def\R{{\Bbb R}}
\def\C{{\Bbb C}}
\def\l{\left}
\def\r{\right}
\def\mo{\on{mod}}
\def\xt{\times}
\def\notimm{\not\subseteq}
\def\Remark{\noindent{\it  Remark}}
\def\kut{\widetilde{KU}}

\def\*#1{\mathbf{#1}}
\def\0{$\*0$}
\def\1{$\*1$}
\def\22{$(\*2,\*2)$}
\def\33{$(\*3,\*3)$}
\def\ss{\smallskip}
\def\ssum{\sum\limits}
\def\dsum{\displaystyle\sum}
\def\la{\langle}
\def\ra{\rangle}
\def\on{\operatorname}
\def\od{\text{od}}
\def\ev{\text{ev}}
\def\o{\on{o}}
\def\U{\on{U}}
\def\lg{\on{lg}}
\def\a{\alpha}
\def\bz{{\Bbb Z}}
\def\vareps{\varepsilon}
\def\bc{{\bold C}}
\def\bN{{\bold N}}
\def\nut{\widetilde{\nu}}
\def\tfrac{\textstyle\frac}
\def\b{\beta}
\def\G{\Gamma}
\def\g{\gamma}
\def\zt{{\Bbb Z}_2}
\def\pt{\widetilde{p}}
\def\zth{{\bold Z}_2^\wedge}
\def\bs{{\bold s}}
\def\bx{{\bold x}}
\def\bof{{\bold f}}
\def\bq{{\bold Q}}
\def\be{{\bold e}}
\def\lline{\rule{.6in}{.6pt}}
\def\xb{{\overline x}}
\def\xbar{{\overline x}}
\def\ybar{{\overline y}}
\def\zbar{{\overline z}}
\def\ebar{{\overline \be}}
\def\nbar{{\overline n}}
\def\zb{{\overline z}}
\def\Mbar{{\overline M}}
\def\et{{\widetilde e}}
\def\ni{\noindent}
\def\ms{\medskip}
\def\ehat{{\hat e}}
\def\what{{\widehat w}}
\def\Yhat{{\widehat Y}}
\def\Wbar{{\overline W}}
\def\minp{\min\nolimits'}
\def\mul{\on{mul}}
\def\N{{\Bbb N}}
\def\Z{{\Bbb Z}}
\def\Q{{\Bbb Q}}
\def\R{{\Bbb R}}
\def\C{{\Bbb C}}
\def\notint{\cancel\cap}
\def\se{\operatorname{secat}}
\def\cS{\mathcal S}
\def\cR{\mathcal R}
\def\el{\ell}
\def\TC{\on{TC}}
\def\dstyle{\displaystyle}
\def\ds{\dstyle}
\def\mt{\widetilde{\mu}}
\def\zcl{\on{zcl}}
\def\Vb#1{{\overline{V_{#1}}}}

\def\Remark{\noindent{\it  Remark}}
\title
{Manifold properties of planar polygon spaces}
\author{Donald M. Davis}
\address{Department of Mathematics, Lehigh University\\Bethlehem, PA 18015, USA}
\email{dmd1@lehigh.edu}
\date{May 8, 2018}

\keywords{polygon spaces, tangent bundle, Stiefel-Whitney classes, parallelizability}
\thanks {2000 {\it Mathematics Subject Classification}: 57R22, 57R20, 55R25, 57R25.}

\maketitle
\begin{abstract} We prove that the tangent bundle of a generic space of planar $n$-gons with specified side lengths, identified under isometry, plus a trivial line bundle is isomorphic to $(n-2)$ times a canonical line bundle. We then discuss consequences for orientability, cobordism class,  immersions, and parallelizability.
 \end{abstract}
 \section{Main results}\label{introsec}
Let $\ell=(\ell_1,\ldots,\ell_n)$ be an $n$-tuple of positive real numbers, and let $M(\ell)$ (resp.~$\Mbar(\ell)$) denote the space of $n$-gons in the plane with successive side lengths  $\ell_1,\ldots,\ell_n$, identified under oriented isometry (resp.~isometry). These spaces have been studied by many authors. See, for example, \cite{Dcoh}, \cite{Dor}, \cite{D3}, \cite{Far}, \cite{H}, \cite{HK}, \cite{HR}, \cite{KK}, or \cite{KM}. If there is no subset $S\subset \{1,\ldots,n\}$ such that $\ds\sum_{i\in S}\ell_i=\ds\sum_{i\not\in S}\ell_i$, then $\ell$ is called {\it generic}, and $M(\ell)$ and $\Mbar(\ell)$ are $(n-3)$-manifolds. We restrict our attention to generic length vectors.

 There is a canonical double cover $p:M(\ell)\to \Mbar(\ell)$ which identifies a polygon with its reflection across a side. Associated to $p$ is a canonical line bundle $\xi$ over $\Mbar(\ell)$.
 Let $\tau(M(\ell))$ and $\tau(\Mbar(\ell))$ denote tangent bundles of these spaces. Our first theorem is
 \begin{thm}\label{tanbdlthm} There is a vector bundle isomorphism $\tau(\Mbar(\ell))\oplus\vareps\approx(n-2)\xi$, where $\vareps$ is a trivial line bundle.\end{thm}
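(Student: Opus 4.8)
The plan is to realize $M(\ell)$ as an explicit codimension-two submanifold of a torus, transport the tangent--normal bundle splitting through the double cover $p$, and then descend.

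First I would set up a concrete model. Present a planar $n$-gon with side lengths $\ell_i$ by its tuple of unit edge-direction vectors $(u_1,\dots,u_n)\in(S^1)^n$ subject to $\sum\ell_iu_i=0$, and normalize away the residual rotation by replacing $u_j$ with $u_n^{-1}u_j$. This gives a diffeomorphism $M(\ell)\approx V$, where
\[
V:=\bigl\{\,w\in(S^1)^{n-1}\ :\ \ell_1w_1+\cdots+\ell_{n-1}w_{n-1}=-\ell_n\,\bigr\},
\]
under which the deck transformation of $p$ becomes the involution $w\mapsto\overline w$; this involution is free because a fixed point would have all $w_j\in\{\pm1\}$, i.e.\ would be a collinear polygon, which genericity forbids. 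Hence $\Mbar(\ell)\approx V/\zt$, and $\xi$ is the line bundle associated to this free double cover. The same genericity makes $-\ell_n$ a regular value of $g(w)=\ell_1w_1+\cdots+\ell_{n-1}w_{n-1}+\ell_n$, so $V$ is a smooth closed $(n-3)$-manifold.

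Next I would run the tangent--normal splitting $\zt$-equivariantly. Equip $(S^1)^{n-1}$ with its flat metric, which is conjugation-invariant, to get a $\zt$-equivariant isomorphism $\tau V\oplus\nu\cong\tau((S^1)^{n-1})|_V$, with $\nu$ the normal bundle. The global framing $t\mapsto iwt$ of $\tau(S^1)$ is negated by $w\mapsto\overline w$, so $\tau((S^1)^{n-1})|_V$ is $\zt$-equivariantly $V\times\R^{n-1}$ with $\zt$ acting by $-1$ on each summand; and $dg$ identifies $\nu$ with the pullback of $T_0\C$, on which conjugation acts $\C$-antilinearly, so $\nu\cong V\times(\R_+\oplus\R_-)$ equivariantly, where $\R_\pm$ are the $(\pm1)$-eigenlines of conjugation. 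Since $\zt$ acts freely on $V$, equivariant bundles correspond to bundles over $V/\zt\approx\Mbar(\ell)$, with $\tau V\mapsto\tau(\Mbar(\ell))$, $V\times\R_-\mapsto\xi$, and $V\times\R_+\mapsto\vareps$; descending the splitting yields
\[
\tau(\Mbar(\ell))\oplus\vareps\oplus\xi\ \approx\ (n-1)\,\xi .
\]

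Finally I would cancel one copy of $\xi$. Both $\tau(\Mbar(\ell))\oplus\vareps$ and $(n-2)\xi$ have rank $n-2$, which exceeds $\dim\Mbar(\ell)=n-3$, so over $\Mbar(\ell)$ they are isomorphic as soon as they are stably isomorphic; the displayed identity provides exactly that. Hence $\tau(\Mbar(\ell))\oplus\vareps\approx(n-2)\xi$. I expect the only delicate points to be the equivariant bookkeeping in the middle step --- it is precisely the signs of the $\zt$-actions that convert trivial bundles over $V$ into copies of $\xi$ over $\Mbar(\ell)$, so an error there would spoil the answer --- and the legitimacy of the final cancellation, which is why it matters that the rank $n-2$ lands just above $\dim\Mbar(\ell)$.
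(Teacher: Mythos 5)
Your proposal is correct and follows essentially the same route as the paper: realize $M(\ell)$ as a regular level set of $w\mapsto\sum\ell_jw_j$ in $(S^1)^{n-1}$, use the conjugation-equivariant framing $v_j(w)=iw_j$ (which is negated by the involution, so the restricted tangent bundle of the torus descends to $(n-1)\xi$), split the normal bundle equivariantly into the $\pm1$-eigenlines of conjugation to get $\vareps\oplus\xi$, and cancel one $\xi$ from $\tau(\Mbar(\ell))\oplus\vareps\oplus\xi\approx(n-1)\xi$ in the stable range. The paper packages the last step as a destabilization lemma (with a $w_1$ hypothesis when the rank $n-2$ is even, which is satisfied here since stable isomorphism forces $w_1=(n-2)R=0$), but the argument is the same.
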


   We were initially led to Theorem \ref{tanbdlthm} by an investigation into the Stiefel-Whitney classes of $\tau(\Mbar(\ell))$. These, of course, follow immediately from that theorem, but the discoveries came in the opposite order. We will give our original proof of the following corollary in Section \ref{SWsec}.

\begin{cor}\label{thm} Let $R=w_1(\xi)\in H^1(\Mbar(\ell);\zt)$. The total Stiefel-Whitney class of $\tau(\Mbar(\ell))$ satisfies $$w(\tau(\Mbar(\ell)))=(1+R)^{n-2}.$$\end{cor}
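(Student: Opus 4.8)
The plan is to deduce the corollary directly from Theorem \ref{tanbdlthm}, using only the standard formal properties of the total Stiefel--Whitney class: it is stable (so $w(E\oplus\vareps)=w(E)$ for a trivial bundle $\vareps$) and multiplicative (so $w(E\oplus F)=w(E)w(F)$). First I would apply $w$ to the isomorphism $\tau(\Mbar(\ell))\oplus\vareps\approx(n-2)\xi$ of Theorem \ref{tanbdlthm}. Then I would compute
\[
w(\tau(\Mbar(\ell)))=w(\tau(\Mbar(\ell))\oplus\vareps)=w\bigl((n-2)\xi\bigr)=w(\xi)^{n-2}=(1+R)^{n-2},
\]
where the last equality uses that $\xi$ is a line bundle, so $w(\xi)=1+w_1(\xi)=1+R$. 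On this route there is no obstacle at all; the entire content lies in Theorem \ref{tanbdlthm}, as the author notes.

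The reason Section \ref{SWsec} exists is that the author's original argument is logically independent of Theorem \ref{tanbdlthm} — indeed the formula $w(\tau(\Mbar(\ell)))=(1+R)^{n-2}$ was found first and suggested the bundle-level splitting. A self-contained proof of the corollary, not invoking Theorem \ref{tanbdlthm}, would instead start from a known presentation of the ring $H^*(\Mbar(\ell);\zt)$ (generated by $R$ together with classes indexed by subsets of $\{1,\dots,n\}$) and try to pin down $w(\tau(\Mbar(\ell)))$ inside that ring. The main work, and the place I would expect the real difficulty, is identifying the Stiefel--Whitney classes without already having the splitting: one would most naturally do this via a bending-flow Morse or handle decomposition of $\Mbar(\ell)$, computing the normal bundle data of the critical submanifolds, or by restricting to carefully chosen embedded submanifolds (smaller polygon spaces, or projective subspaces) on which both sides can be evaluated and matched. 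Once the formula is verified on a generating set of homology classes, naturality of $w$ together with the ring structure of $H^*(\Mbar(\ell);\zt)$ forces the stated identity $w(\tau(\Mbar(\ell)))=(1+R)^{n-2}$ globally.
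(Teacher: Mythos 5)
Your derivation from Theorem \ref{tanbdlthm} is correct and complete: stability and multiplicativity of $w$, together with $w(\xi)=1+R$, give the formula immediately, and this is exactly the route the paper indicates when it says the corollary ``follows immediately from that theorem.'' Nothing more is needed.

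One factual correction to your commentary, though it does not affect the validity of your proof: the independent argument in Section \ref{SWsec} is not a Morse-theoretic or submanifold-restriction computation. It instead uses the Wu formula $w(\tau(M))=\sq(v)$. The relation $V_a^2=RV_a$ in $H^*(\Mbar(\ell);\zt)$ forces $\sq^i$ to act on $H^{m-i}$ as multiplication by $\tbinom{m-i}{i}R^i$, which identifies the Wu classes as $v_i=\tbinom{m-i}{i}R^i$; applying the total Steenrod square and the combinatorial identity $\sum_i\tbinom{m-i}{i}\tbinom{i}{k-i}\equiv\tbinom{m+1}{k}\pmod 2$ then yields $(1+R)^{m+1}=(1+R)^{n-2}$. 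That argument needs only the multiplicative structure of the cohomology ring, no geometric decomposition of $\Mbar(\ell)$.
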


We deduce consequences of these results for orientability, cobordism class, immersions in Euclidean space, and parallelizability.

Our first corollary determines orientability of $\Mbar(\ell)$.
\begin{cor}\label{orcor} $\Mbar(\ell)$ is orientable iff $n$ is even or $\Mbar(\ell)$ is diffeomorphic to $(S^1)^{n-3}$.\end{cor}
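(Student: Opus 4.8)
The plan is to read $w_1(\tau(\Mbar(\ell)))$ off Corollary \ref{thm} and then to interpret its vanishing through the canonical double cover $p\colon M(\ell)\to\Mbar(\ell)$.

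Since the degree-one part of $(1+R)^{n-2}$ is $(n-2)R$, Corollary \ref{thm} gives $w_1(\tau(\Mbar(\ell)))=(n-2)R$ in $H^1(\Mbar(\ell);\zt)$. A closed smooth manifold is orientable exactly when its first Stiefel--Whitney class vanishes, and as the coefficients are $\zt$, the class $(n-2)R$ vanishes iff $n$ is even or $R=0$. Thus $\Mbar(\ell)$ is orientable iff $n$ is even or $R=0$, and to deduce the stated equivalence it then suffices to prove the two implications: (i) if $\Mbar(\ell)\approx(S^1)^{n-3}$ then $\Mbar(\ell)$ is orientable, and (ii) if $R=0$ then $\Mbar(\ell)\approx(S^1)^{n-3}$. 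Statement (i) is immediate, tori being orientable.

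For (ii) I would argue through $p$. As $R=w_1(\xi)$ and $\xi$ is a real line bundle, $R=0$ forces $\xi$ to be trivial, hence $p$ to be the trivial $2$-fold cover. For generic $\ell$ the deck transformation --- reflection of a polygon across a side --- is free: a fixed point would be a polygon carried to itself, up to rotation, by that reflection, and an elementary computation in the edge-direction model $M(\ell)=\{(u_i)\in(S^1)^n:\sum\ell_iu_i=0\}/SO(2)$ (where the reflection becomes $u_i\mapsto\bar u_i$) shows this can only happen for a ``collinear'' polygon, with all $u_i=\pm\mu$ for a fixed $\mu\in S^1$; such a polygon yields signs $\varepsilon_i=\pm1$, not all equal, with $\sum\varepsilon_i\ell_i=0$, which genericity forbids. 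So $p$ is an honest covering map, and its triviality makes $M(\ell)$ disconnected. At this point I would invoke the known structure of generic planar polygon spaces (the chamber/surgery description, as in \cite{HK}, \cite{H}, \cite{HR}): a generic $M(\ell)$ is disconnected only in certain chambers, and on each of them $M(\ell)\approx(S^1)^{n-3}\sqcup(S^1)^{n-3}$. Granting this, $M(\ell)$ has exactly two components, each an $(n-3)$-torus; the free deck involution cannot preserve a component --- otherwise $p$ restricted over the corresponding component of $\Mbar(\ell)$ would be a connected double cover and $R$ would be nonzero there --- so it interchanges the two components, and $\Mbar(\ell)\approx(S^1)^{n-3}$.

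The main obstacle is precisely this structural input: identifying which chambers of generic length vectors give a disconnected $M(\ell)$, and checking that on each of them $M(\ell)$ is a disjoint union of two tori rather than two copies of some other stably parallelizable $(n-3)$-manifold --- the latter being all that Theorem \ref{tanbdlthm} supplies directly (with $\xi$ trivial it only yields that $\Mbar(\ell)$ is stably parallelizable). Once that classification is in hand, the remainder of the argument is formal.
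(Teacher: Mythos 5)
Your proposal is correct and follows essentially the same route as the paper: read off $w_1=(n-2)R$ from Corollary \ref{thm}, reduce to ``$n$ even or $R=0$,'' and translate $R=0$ into triviality of the double cover $M(\ell)\to\Mbar(\ell)$, hence disconnectedness of $M(\ell)$. The structural input you flag as the main obstacle --- that a disconnected generic $M(\ell)$ forces $\Mbar(\ell)$ to be an $(n-3)$-torus --- is exactly what the paper imports by citation (Remark 2.8 of \cite{H}), so nothing further is needed.
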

\begin{proof} Since $w_1$ is the obstruction to orientability, it is immediate from Corollary \ref{thm} that $\Mbar(\ell)$ is orientable iff $n$ is even or $R=0$. But $R=0$ iff the double cover $M(\ell)\to \Mbar(\ell)$ is trivial, and this is true iff $M(\ell)$ is disconnected. It is noted in \cite[Rmk 2.8]{H} that $M(\ell)$ is disconnected iff  $\Mbar(\ell)$ is diffeomorphic to an $(n-3)$-torus.\end{proof}

Here is another corollary of Theorem \ref{thm}.
\begin{cor}\label{cor} If $R^{n-3}=0$, then $\Mbar(\ell)$ is null cobordant.  All $n$-gon spaces $\Mbar(\ell)$ which have $R^{n-3}\ne0$ are cobordant to $RP^{n-3}$.\end{cor}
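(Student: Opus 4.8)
The plan is to invoke Thom's theorem that the unoriented cobordism class of a closed smooth manifold is detected by its Stiefel--Whitney numbers, and to read off those numbers from Corollary~\ref{thm}. Write $m=n-3=\dim\Mbar(\ell)$. By Corollary~\ref{thm} we have $w_i(\tau(\Mbar(\ell)))=\binom{n-2}{i}R^i$, so for any partition $(i_1,\dots,i_k)$ of $m$ the corresponding characteristic class is
$$w_{i_1}\cdots w_{i_k}=\Bigl(\textstyle\prod_j\binom{n-2}{i_j}\Bigr)R^{m}\in H^{m}(\Mbar(\ell);\zt),$$
and hence the associated Stiefel--Whitney number is $\bigl(\prod_j\binom{n-2}{i_j}\bigr)\langle R^m,[\Mbar(\ell)]\rangle$. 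Thus every Stiefel--Whitney number of $\Mbar(\ell)$ is governed by the single quantity $\langle R^{n-3},[\Mbar(\ell)]\rangle\in\zt$.

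First I would dispose of the case $R^{n-3}=0$: then the displayed pairing is $0$ for every partition of $m$, so all Stiefel--Whitney numbers of $\Mbar(\ell)$ vanish and $\Mbar(\ell)$ is null cobordant. For the case $R^{n-3}\ne 0$, I would first note that $\Mbar(\ell)$ is connected: if it were not, then neither would $M(\ell)$ be, and as in the proof of Corollary~\ref{orcor} this forces $R=0$. Since $\Mbar(\ell)$ is a closed connected $m$-manifold, $H^{m}(\Mbar(\ell);\zt)\cong\zt$ and the nonzero class $R^{m}$ must be its generator, so $\langle R^{m},[\Mbar(\ell)]\rangle=1$ and the Stiefel--Whitney number attached to $(i_1,\dots,i_k)$ is $\prod_j\binom{n-2}{i_j}\bmod 2$. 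Running the identical computation for $RP^{n-3}$ — whose total Stiefel--Whitney class is $(1+a)^{n-2}$, the same universal formula as in Corollary~\ref{thm}, with $a\in H^1(RP^{n-3};\zt)$ the generator and $\langle a^{n-3},[RP^{n-3}]\rangle=1$ — yields exactly the same list of Stiefel--Whitney numbers. By Thom's theorem $\Mbar(\ell)$ and $RP^{n-3}$ are therefore cobordant.

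The argument is essentially formal once Corollary~\ref{thm} is available; the one step needing a little care is the implication $R^{n-3}\ne0\Rightarrow\langle R^{n-3},[\Mbar(\ell)]\rangle=1$, i.e. the identification of $R^{n-3}$ with the generator of $H^{n-3}(\Mbar(\ell);\zt)$, which rests on connectedness of $\Mbar(\ell)$ together with Poincar\'e duality over $\zt$. I expect this to be the main (and only) obstacle; everything else is bookkeeping with the binomial formula and a citation of the theorem that unoriented cobordism is detected by Stiefel--Whitney numbers.
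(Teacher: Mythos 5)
Your proof is correct and follows essentially the same route as the paper's: both read off all top-dimensional Stiefel--Whitney monomials as $\bigl(\prod_j\binom{n-2}{i_j}\bigr)R^{n-3}$ from Corollary \ref{thm}, invoke the fact that unoriented cobordism is detected by Stiefel--Whitney numbers, and compare with $w(RP^{n-3})=(1+a)^{n-2}$. The only difference is that you make explicit the (correct) verification that $R^{n-3}\ne0$ forces $\langle R^{n-3},[\Mbar(\ell)]\rangle=1$ via connectedness, a point the paper leaves implicit.
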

\begin{proof} Let $m=n-3$. The cobordism class of an $m$-manifold is determined by which $m$-dimensional Stiefel-Whitney monomials are nonzero. By Corollary \ref{thm}, all $m$-dimensional Stiefel-Whitney monomials in $H^*(\Mbar(\ell);\zt)$ equal $R^m$, so if $R^m=0$, they are all 0. If $R^m\ne0$, then $w_{i_1}^{a_1}\cdots w_{i_k}^{a_k}$ with distinct $i_j$'s and positive $a_j$'s is nonzero iff all $\binom{m+1}{i_j}$ are odd, which is also true for $RP^m$.\end{proof}

Our third corollary involves immersions in Euclidean space.
\begin{cor}\label{cor3} If $2^e+3\le n\le 2^{e+1}$ and $R^{2^{e+1}+2-n}\ne0\in H^*(\Mbar(\ell);\zt)$, then $\Mbar(\ell)$ cannot be immersed in $\R^{2^{e+1}-2}$.\end{cor}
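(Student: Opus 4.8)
The plan is to use the standard obstruction to immersions coming from the dual Stiefel–Whitney class. Recall that if a closed smooth $m$-manifold $X$ immerses in $\R^{m+k}$, then the dual Stiefel–Whitney class $\wbar(X)$, defined by $w(\tau(X))\cdot\wbar(X)=1$, must vanish in degrees $>k$. Here $m=n-3$, and we are asking about immersion in $\R^{2^{e+1}-2}$, so $k=2^{e+1}-2-(n-3)=2^{e+1}+1-n$.

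The key computation is to identify $\wbar(\Mbar(\ell))$. By Corollary \ref{thm}, $w(\tau(\Mbar(\ell)))=(1+R)^{n-2}$, so $\wbar(\Mbar(\ell))=(1+R)^{-(n-2)}=(1+R)^{2-n}$. The plan is to read off the coefficient of $R^j$ in this power series over $\zt$: it is $\binom{2-n}{j}\equiv\binom{n-3+j}{j}\pmod 2$. I would then pick the specific value $j=2^{e+1}+2-n$, which under the hypothesis $2^e+3\le n\le 2^{e+1}$ satisfies $2\le j\le 2^e-1$, so in particular $j>k=2^{e+1}+1-n$ (indeed $j=k+1$). The claim is that this binomial coefficient is odd: $n-3+j=2^{e+1}-1$, which is a string of $1$'s in binary, so $\binom{2^{e+1}-1}{j}$ is odd for every $j$ by Lucas' theorem. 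Hence $\wbar_{j}(\Mbar(\ell))=R^{j}$, which is nonzero by hypothesis.

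Putting these together: $\wbar_{2^{e+1}+2-n}(\Mbar(\ell))=R^{2^{e+1}+2-n}\ne0$, and since $2^{e+1}+2-n>2^{e+1}+1-n$, the manifold $\Mbar(\ell)$ does not immerse in $\R^{(n-3)+(2^{e+1}+1-n)}=\R^{2^{e+1}-2}$, which is exactly the assertion. I would also remark that $\wbar$ vanishes in degrees exceeding $n-3$ automatically (dimension reasons), and check that the target degree $j=2^{e+1}+2-n$ does not exceed $n-3$; this follows from $n\ge 2^e+3$, which gives $2^{e+1}+2-n\le 2^{e+1}-2^e-1=2^e-1\le n-4<n-3$, so the nonvanishing hypothesis is not vacuous.

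The main obstacle — really the only point requiring care — is the bookkeeping with the binomial coefficients mod $2$: getting the sign/index conventions right in the identity $\binom{-a}{j}\equiv\binom{a+j-1}{j}\pmod 2$, and then verifying via Lucas that $\binom{2^{e+1}-1}{j}$ is odd. Everything else is a direct application of the classical dual-Stiefel–Whitney-class immersion obstruction together with Corollary \ref{thm}. I would present the proof in three short steps: (1) state the dual class obstruction and compute $k$; (2) compute $\wbar(\Mbar(\ell))=(1+R)^{2-n}$ and extract the relevant coefficient, using Lucas' theorem to see it is odd; (3) conclude, noting the degree inequality $2^{e+1}+2-n>2^{e+1}+1-n$.
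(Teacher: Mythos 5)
Your proposal is correct and follows essentially the same route as the paper: apply the dual Stiefel--Whitney class obstruction to immersions, compute $\wbar=(1+R)^{-(n-2)}$ from Corollary \ref{thm}, and observe that the relevant coefficient $\binom{-(n-2)}{2^{e+1}+2-n}\equiv\binom{2^{e+1}-1}{2^{e+1}+2-n}=\binom{2^{e+1}-1}{n-3}$ is odd by Lucas. Your additional checks (that the degree $2^{e+1}+2-n$ exceeds the codimension by exactly one and does not exceed $n-3$) are correct and slightly more explicit than the paper's.
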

\begin{proof} If such an immersion exists, then the dual Stiefel-Whitney class $\overline w_{2^{e+1}+2-n}=0$, since $\Mbar(\ell)$ is an $(n-3)$-manifold. By Corollary \ref{thm}, this equals $\binom{-(n-2)}{2^{e+1}+2-n}R^{2^{e+1}+2-n}$. Since $R^{2^{e+1}+2-n}$ is assumed to be nonzero and $\binom{-(n-2)}{2^{e+1}+2-n}\equiv\binom{2^{e+1}-1}{n-3}\ne0\in\zt$, we obtain a contradiction to the assumed immersion.\end{proof}

Note that if $n=2^e+3$, this nonimmersion would be optimal, since the $2^e$-manifold $\Mbar(\ell)$ certainly immerses in $\R^{2^{e+1}-1}$.

Perhaps our most interesting result regards the parallelizability of $\Mbar(\ell)$. The proof of this appears in Section \ref{pfsec}.
\begin{thm}\label{pthm} Let $\ell=(\ell_1,\ldots,\ell_n)$ be a generic length vector.
\begin{itemize}
\item[a.] If $n$ is odd, then $\Mbar(\ell)$ is parallelizable iff it is diffeomorphic to the $(n-3)$-torus $T^{n-3}$.
\item[b.] If $n=6$ or $10$, $\Mbar(\ell)$ is parallelizable.
\item[c.] Let $n\equiv0\pmod4$ with $n\ge8$. Then $\Mbar(\ell)$ is parallelizable if it is diffeomorphic to $T^{n-3}$ or the $n$-dimensional Klein bottle of \cite{Kn}. If $\ell=(0^{n-5},1,1,1,2,2)$, then the parallelizability of $\Mbar(\ell)$ is not known. Otherwise, $\Mbar(\ell)$ is not parallelizable.
\end{itemize}
\end{thm}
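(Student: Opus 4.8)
The plan is to combine Theorem \ref{tanbdlthm} with the known cohomology of $\Mbar(\ell)$ and the classification results for the extremal cases. From $\tau(\Mbar(\ell))\oplus\vareps\approx(n-2)\xi$ we see that $\Mbar(\ell)$ is parallelizable if and only if $(n-2)\xi$ is stably trivial and, separately, actually trivial (not merely stably); so the first task is to decide when $(n-2)\xi$ is stably trivial. Since $\xi$ is a line bundle, $(n-2)\xi$ is stably trivial iff $(n-2)\xi\oplus\vareps^k$ is trivial for some $k$, which over an $(n-3)$-complex is governed by $KO(\Mbar(\ell))$; but more elementarily, a necessary condition is that all Stiefel–Whitney classes vanish, and by Corollary \ref{thm}, $w(\tau(\Mbar(\ell)))=(1+R)^{n-2}$. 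The plan is to read off exactly when $(1+R)^{n-2}=1$ in $H^*(\Mbar(\ell);\zt)$: if $R=0$ this is automatic, while if $R\ne0$ one needs $\binom{n-2}{i}R^i=0$ for all $i\ge1$ up to the top dimension, and one analyzes this using Lucas' theorem together with the known nonvanishing behavior of powers of $R$ in the cohomology ring of $\Mbar(\ell)$ (as computed in \cite{Dcoh} or \cite{Dor}).

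For part (a), when $n$ is odd, $n-3$ is even, and I would show that unless $R=0$ the class $(1+R)^{n-2}$ has a nonzero term: write $n-2$ in binary and note that for $n$ odd, $n-2$ is odd, so $\binom{n-2}{1}=n-2$ is odd, giving $w_1(\tau)=R\ne0$ unless $R=0$. Thus parallelizability forces $R=0$, which as in the proof of Corollary \ref{orcor} happens iff the double cover is trivial iff $\Mbar(\ell)\cong T^{n-3}$; and the torus is of course parallelizable, giving the iff. For part (c), $n\equiv0\pmod 4$ means $n-2\equiv2\pmod 4$, so $\binom{n-2}{1}$ is even (good) but $\binom{n-2}{2}=\binom{n-2}{2}$ is odd, so $w_2(\tau)=R^2$, and parallelizability forces $R^2=0$ in $H^2(\Mbar(\ell);\zt)$. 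The task is then to classify generic $\ell$ with $n\equiv0\pmod4$ for which $R^2=0$; I expect this to come down to a short list via the combinatorial description of $H^*(\Mbar(\ell))$, yielding the torus, the Klein bottle of \cite{Kn}, and the exceptional length vector $(0^{n-5},1,1,1,2,2)$. For the torus and Klein bottle one checks parallelizability directly (the torus is standard; the Klein bottle case presumably follows from \cite{Kn} or an explicit framing), and for the exceptional case $R^2=0$ but the deeper obstruction (an element of $KO$ or a secondary operation) cannot be resolved by these methods, hence "not known."

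For part (b), the cases $n=6$ and $n=10$ are genuinely special: here $n-2=4$ and $n-2=8$ respectively are powers of $2$, so $(1+R)^{n-2}=1+R^{n-2}$, and since $\dim\Mbar(\ell)=n-3<n-2$ we get $R^{n-2}=0$ automatically, so \emph{all} Stiefel–Whitney classes of $\tau(\Mbar(\ell))$ vanish regardless of $\ell$. The plan is to upgrade this to actual triviality of $(n-2)\xi=2^k\xi$: I would use that $\xi$ is classified by a map $\Mbar(\ell)\to \R P^\infty$ and that $2^k\xi = $ (pullback of) $2^k$ copies of the Hopf bundle, together with James-type results on the triviality of $2^k$ copies of a line bundle over a low-dimensional complex — specifically, $4\gamma$ is trivial over any $3$-complex and $8\gamma$ is trivial over any $7$-complex (these are classical; $4\gamma_1$ is trivial over $\R P^3$ and more generally over any $3$-dimensional CW complex since $KO^{-1}$-type obstructions vanish). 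Since $\dim\Mbar(\ell)=3$ when $n=6$ and $=7$ when $n=10$, this gives triviality of $(n-2)\xi$, hence of $\tau(\Mbar(\ell))\oplus\vareps$, and then parallelizability follows because a stably parallelizable closed manifold of dimension $\equiv3\pmod4$ (here $3$ and $7$) is parallelizable — or more directly, $(n-2)\xi$ being trivial already gives $\tau\oplus\vareps\approx\vareps^{n-2}$, and a standard argument (stably trivial tangent bundle of a closed manifold not of dimension $0,1,3,7$... — actually here dimensions $3$ and $7$ are exactly the good ones) or the explicit splitting makes $\tau$ itself trivial.

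The main obstacle I anticipate is part (c): showing that $R^2=0$ (for $n\equiv0\bmod4$) holds for \emph{only} the torus, the Klein bottle, and the one exceptional $\ell$ requires a careful case analysis of the cohomology ring $H^*(\Mbar(\ell);\zt)$ in terms of the combinatorics of the length vector (which subsets $S$ are "short"), and then for the non-torus, non-Klein-bottle, non-exceptional cases one must produce an \emph{honest} nonvanishing obstruction to parallelizability beyond Stiefel–Whitney classes — presumably by showing $(n-2)\xi$ is not even stably trivial using $\kot(\Mbar(\ell))$ or an $e$-invariant/$KO$-Euler-class computation. The exceptional status of $(0^{n-5},1,1,1,2,2)$ strongly suggests that there the stable triviality question genuinely escapes the available $KO$-theoretic tools, which is why it is left open.
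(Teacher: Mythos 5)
Your overall architecture matches the paper's for parts (a) and (c) — reduce parallelizability to the vanishing of $w_1=\binom{n-2}{1}R$ and $w_2=\binom{n-2}{2}R^2$ via Corollary \ref{thm}, then identify the spaces where the relevant power of $R$ vanishes — but there is a genuine gap in part (c): you assert, without proof, that $R^2=0$ happens exactly for the torus, the Klein bottle of \cite{Kn}, and the single exceptional length vector $(0^{n-5},1,1,1,2,2)$. That classification is the real content of part (c) (the paper's Theorem \ref{R2thm}), and it is not a formality: it requires the Hausmann--Knudson presentation of $H^*(\Mbar(\ell);\zt)$ by subgees and relations $\cR_S$, a combinatorial lemma restricting which pairs of sets can simultaneously be subgees (Lemma \ref{Glem}, which is the only place the short/long structure of $\ell$ enters), and a five-case analysis of all possible gees of size $\ge n-4$, checking in each case whether $R^2$ lies in the span of the relations, including when the distinguished gee is accompanied by others. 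Your proposal names this as "the main obstacle" but supplies none of it. A smaller slip in your last paragraph: for the "otherwise" cases of (c) you say one must find an obstruction \emph{beyond} Stiefel--Whitney classes; in fact those are precisely the cases with $R^2\ne0$, so $w_2(\tau)=R^2\ne0$ already obstructs parallelizability, as you correctly noted earlier.

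For part (b) your route genuinely differs from the paper's, and it works. For $n=6$ the paper simply observes that $\Mbar(\ell)$ is an orientable closed 3-manifold (orientability from Corollary \ref{orcor} since $n$ is even) and quotes Stiefel; your argument via $4\xi$ being stably trivial is more roundabout but valid. For $n=10$ the paper runs the Atiyah--Hirzebruch spectral sequence for $\kot$ against the double cover $M(\ell)\to\Mbar(\ell)$ to force $[8\xi]=0$, whereas you compress the classifying map of $\xi$ to $\R P^7$ and use Adams' computation $\kot(\R P^7)\cong\Z/8$ to get $8([\xi]-1)=0$ directly; both then finish with Bredon--Kosinski in dimension $7\equiv3\pmod4$. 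Your argument is arguably cleaner and avoids the extension analysis of $F_2(\ )$, at the cost of importing the $\kot(\R P^m)$ computation. So (a) and (b) are sound; (c) needs the $R^2=0$ classification actually carried out before the theorem is proved.
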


For $n\le13$, Theorem \ref{pthm} determines the parallelizability of all spaces $\Mbar(\ell)$ except two, one with $n=8$ and one with $n=12$.
The 0-lengths in Theorem \ref{pthm}(c) are small sides such that if there are $k$ 0's (denoted $0^k$), the length of each is less than $1/k$.

We also remark here briefly about the classification of the spaces $\Mbar(\ell)$ (\cite{HR}).
These spaces are classified completely, up to diffeomorphism, by their {\it genetic code}, which is a set of subsets, called {\it genes}, of $[n]:=\{1,\ldots,n\}$. We will define these in Section \ref{pfsec}. The genetic codes are listed for $n\le6$ in \cite{HR} and for $n\le 9$ in \cite{web}. For $6\le n\le9$, the number of diffeomorphism classes of nonempty $n$-gon spaces $\Mbar(\ell)$ is given in Table \ref{T1}.

\begin{table}[H]
\caption{Number of nonempty $n$-gon spaces $\Mbar(\ell)$}
\label{T1}
\begin{tabular}{c|cccc}
$n$&$6$&$7$&$8$&$9$\\
\hline
&$20$&$134$&$2469$&$175427$
\end{tabular}
\end{table}

In Section \ref{Rsec}, we discuss how to tell, in terms of the genetic code of $\Mbar(\ell)$, whether certain powers of $R$ are nonzero. In particular, we determine for each of the 134 7-gon spaces their cobordism class and whether Corollary \ref{cor3} can be used to obtain an optimal nonimmersion of these 4-manifolds in $\R^6$.

\section{Proofs}\label{pfsec}
In this section, we prove Theorems \ref{tanbdlthm} and \ref{pthm}.

En route to proving Theorem \ref{tanbdlthm}, we will also note the following result, which was pointed out to us by Jean-Claude Hausmann. We thank him for his help on various matters.
 \begin{thm} \label{Mbdl} The vector bundle $\tau(M(\ell))\oplus\vareps$ is isomorphic to a trivial bundle.\end{thm}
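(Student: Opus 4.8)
The plan is to present $M(\ell)$ concretely as a codimension-two submanifold of a torus and then improve the resulting stable triviality of its tangent bundle by one dimension.

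First I would normalize. Requiring the base vertex of a closed planar $n$-gon to be the origin and its first edge to point in the direction $1\in S^1$ uses up all oriented isometries, and gives a diffeomorphism
$$M(\ell)\ \cong\ \bigl\{(z_2,\dots,z_n)\in(S^1)^{n-1}\ :\ \ell_2z_2+\cdots+\ell_nz_n=-\ell_1\in\C\bigr\}.$$
Write $g(z)=\ell_2z_2+\cdots+\ell_nz_n$. I claim $-\ell_1$ is a regular value of $g$ precisely when $\ell$ is generic: the differential $dg$, viewed as a real-linear map $\R^{n-1}\to\C$, sends the $j$-th coordinate vector to $\ell_j\,iz_j$, so it is onto unless all the $z_j$ lie on one line through the origin, i.e.\ all equal $\pm u$ for a fixed $u\in S^1$; but then $g(z)\in\R u$, and $g(z)=-\ell_1$ forces $u=\pm1$ together with an identity $\sum_{j\in S}\ell_j=\sum_{j\notin S}\ell_j$ for some $S\subset\{1,\dots,n\}$, contradicting genericity. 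Hence for generic $\ell$ the space $M(\ell)$ is a smooth closed $(n-3)$-manifold, and it is orientable, being the preimage of a regular value of a smooth map from the oriented manifold $(S^1)^{n-1}$ to the oriented manifold $\C$.

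Next, the normal bundle of $M(\ell)$ in $(S^1)^{n-1}$ is trivialized by $dg$, so it is $\vareps^2$; since $(S^1)^{n-1}$ is parallelizable, the exact sequence $0\to\tau(M(\ell))\to\vareps^{n-1}\to\vareps^2\to0$ over $M(\ell)$ splits and yields $\tau(M(\ell))\oplus\vareps^2\approx\vareps^{n-1}$, i.e.\ $\tau(M(\ell))$ is stably trivial. To drop one trivial summand, set $m=n-3=\dim M(\ell)$. The homotopy fibre of $BSO(m+1)\to BSO(m+2)$ is $S^{m+1}$, so this map is $(m+1)$-connected, whence $[X,BSO(m+1)]\to[X,BSO(m+2)]$ is a bijection for every CW complex $X$ of dimension $\le m$. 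Applying this to $X=M(\ell)$ — which is oriented, so that $\tau(M(\ell))\oplus\vareps$ does define a class on the left — its image $\tau(M(\ell))\oplus\vareps^2\approx\vareps^{m+2}$ is the trivial class, so by injectivity $\tau(M(\ell))\oplus\vareps$ is itself trivial. The one delicate point is this last connectivity reduction, which is exactly why orientability of $M(\ell)$ is recorded above; the remainder is bookkeeping with the regular-value picture.

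For completeness I would remark that, once Theorem \ref{tanbdlthm} is available, the statement is also immediate: the double cover $p\colon M(\ell)\to\Mbar(\ell)$ satisfies $p^*\xi\approx\vareps$ and $\tau(M(\ell))\approx p^*\tau(\Mbar(\ell))$, so $\tau(M(\ell))\oplus\vareps\approx p^*\bigl(\tau(\Mbar(\ell))\oplus\vareps\bigr)\approx p^*\bigl((n-2)\xi\bigr)\approx\vareps^{n-2}$.
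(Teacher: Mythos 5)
Your proof is correct and follows essentially the same route as the paper: realize $M(\ell)$ as the preimage of a regular value of $(z_j)\mapsto\sum\ell_jz_j$ on the torus $(S^1)^{n-1}$, observe that the normal bundle is trivialized by the differential so that $\tau(M(\ell))\oplus\vareps^2\approx\vareps^{n-1}$, and then destabilize by one. The only difference is the final step: where you use orientability of $M(\ell)$ and the $(m+1)$-connectivity of $BSO(m+1)\to BSO(m+2)$, the paper invokes its general destabilization Lemma \ref{stprop} (a Moore--Postnikov argument for $BSO(m+1)\to BSO$, resp.\ $BO(m+1)\to BO$), which it needs anyway for Theorem \ref{tanbdlthm}; both versions are valid.
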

 \begin{proof}[Proof of Theorems \ref{Mbdl} and \ref{tanbdlthm}] The manifold $M(\ell)$ can be defined as $F^{-1}(\ell_n)$, where $F:(S^1)^{n-1}\to\C$ is defined by
 $$F(z_1,\ldots,z_{n-1})=\sum_{j=1}^{n-1}\ell_jz_j.$$
 See, e.g., \cite[(1.2)]{Far}.
 Since $\ell$ is generic, $\ell_n$ is a regular value of $F$, i.e., $F^{-1}(\ell_n)\notint\{\pm1\}^{n-1}$, (see e.g., \cite[Thm 3.1]{Hf}), and, moreover, there is an
 $\epsilon$-neighborhood $U$ of $\ell_n$ such that $F^{-1}(U)\notint\{\pm1\}^{n-1}$. The set $W=F^{-1}(U)$ is an open subset of $(S^1)^{n-1}-\{\pm1\}^{n-1}$ and is acted on freely by the involution $\phi$ defined, using complex conjugation, by $\phi(z_1,\ldots,z_{n-1})=(\zb_1,\ldots,\zb_{n-1})$. The manifold $W$ is parallelizable with vector fields $v_1,\ldots,v_{n-1}$ defined by
 $$v_j(z_1,\ldots,z_{n-1})=(0,\ldots,0,iz_j,0,\ldots,0).$$
 Let $\iota:M(\ell)\to W$ denote the inclusion map. Note that the tubular neighborhood $W$ can be considered to be the normal bundle $\nu$ of $M(\ell)$, which is trivial by the construction of $M(\ell)$ and $W$ using $F$. The trivial bundle $\iota^*(\tau(W))$ is isomorphic to $\tau(M(\ell))\oplus\nu$. We obtain
 $$(n-1)\vareps\approx\tau(M(\ell))\oplus2\vareps.$$
 This implies Theorem \ref{Mbdl} by Lemma \ref{stprop}.

 Let $\Wbar$ be the quotient of $W$ by the free involution $\phi$, and note that $\Mbar(\ell)$ is the quotient of $M(\ell)$ by $\phi$. The double cover $p$ defined above is the restriction to $M(\ell)$ of the double cover $W\to \Wbar$. Let $\xi_W$ denote the associated line bundle. Then $(n-1)\xi_W$ can be given by $W\times\R^{n-1}/(w,\la t_j\ra)\sim(\phi(w),\la-t_j\ra)$, and there is a vector bundle isomorphism $\tau(\Wbar)\approx (n-1)\xi_W$ defined by
 $$(w,\sum t_jv_j(w))\leftrightarrow (w,\la t_j\ra).$$
 This is well-defined since $\phi_*:\tau(W)\to \tau(W)$ satisfies $\phi_*(v_j(w))=-v_j(\phi(w))$.

 The normal bundle $\overline\nu$ of $\Mbar(\ell)$ is isomorphic to $M(\ell)\times\R\times\R/(x,s,t)\sim(\phi(x),s,-t)$, which is isomorphic to $\vareps\oplus\xi$. We obtain
 \begin{equation}\label{bdliso}(n-1)\xi\approx\overline\iota^*(\tau(\Wbar))\approx\tau(\Mbar(\ell))\oplus\overline\nu\approx \tau(\Mbar(\ell))\oplus\vareps\oplus\xi.\end{equation}
 There exists a vector bundle $\theta$ over $\Mbar(\ell)$ such that $\xi\oplus\theta$ is isomorphic to a trivial bundle. Adding $\theta$ to both sides of (\ref{bdliso}), we obtain that $\tau(\Mbar(\ell))\oplus\vareps$ is stably isomorphic to $(n-2)\xi$. Theorem \ref{tanbdlthm} now follows from Lemma \ref{stprop}.\end{proof}

The following lemma, which is certainly well-known to experts, was used above.
\begin{lem} Let $\theta$ and $\eta$ be stably isomorphic $(m+1)$-plane bundles over an $m$-dimensional CW-complex $X$. Assume also that if $m+1$ is even, then $w_1(\theta)=0=w_1(\eta)$. Then $\theta$ and $\eta$ are isomorphic.\label{stprop}\end{lem}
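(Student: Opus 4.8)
\emph{Proof strategy.} At bottom this is the standard stable-range fact that the stabilization map $[X,BO(m+1)]\to[X,BO]$ is a bijection once $m+1>\dim X$. I would derive the lemma from the cancellation statement that over a CW-complex $X$ with $\dim X\le m$ a single trivial line summand may be cancelled from vector bundles of rank $\ge m+1$; equivalently, that $[X,BO(r)]\to[X,BO(r+1)]$ is injective for every $r\ge m+1$. Granting this, the lemma is immediate: $\theta$ and $\eta$ stably isomorphic and of equal rank $m+1$ forces $\theta\oplus\vareps^{N}\approx\eta\oplus\vareps^{N}$ for some $N\ge0$, and since at each stage the bundle from which the outermost trivial line is removed has rank $m+1+j\ge m+1$, applying the cancellation fact $N$ times strips off $\vareps^{N}$ and yields $\theta\approx\eta$.

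To prove the cancellation fact I would run obstruction theory on the fibration sequence $S^{r}\to BO(r)\to BO(r+1)$ whose fibre is the sphere $S^{r}=O(r+1)/O(r)$. Given maps $f_{0},f_{1}\colon X\to BO(r)$ that agree after composing to $BO(r+1)$, first use the homotopy lifting property to replace $f_{1}$, up to homotopy of maps into $BO(r)$ (hence up to bundle isomorphism), by a lift of the same map $g\colon X\to BO(r+1)$ that $f_{0}$ lifts; it then suffices to construct a vertical homotopy over $g$ between the two lifts. Built skeleton by skeleton, the successive obstructions live in $H^{i}(X;\pi_{i}(S^{r}))$ for $i\ge0$ (coefficients twisted by $w_{1}$ if one insists on being careful). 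For $i\le r-1$ the group $\pi_{i}(S^{r})$ vanishes, while for $i\ge r$ one has $i\ge m+1>\dim X$ and so $H^{i}(X;-)=0$; no index $i$ can therefore carry an obstruction, the vertical homotopy exists, and $f_{0}\approx f_{1}$.

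There is no genuine obstacle here --- the lemma is soft --- so the only ``work'' is the bookkeeping in the last paragraph, namely observing that the window $r\le i\le m$ of degrees where both factors could be nonzero is empty precisely because $r\ge m+1$. I would also note in passing that the hypothesis $w_{1}(\theta)=w_{1}(\eta)=0$ when $m+1$ is even is not actually needed: because we work exactly one rank above $\dim X$, every potentially awkward cohomology group $H^{\ge m+1}(X;-)$ vanishes regardless of how the sphere-bundle coefficient system is twisted, so orientability plays no role. (That hypothesis would enter only in the borderline situation where $\dim X$ equals the rank of $\theta$, in which the top obstruction sits in $H^{m}(X;\Z_{w_{1}})$ and an orientation is genuinely required to kill it.)
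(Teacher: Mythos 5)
Your argument is correct, and it takes a somewhat different route from the paper's. The paper treats the whole stabilization in one step: it classifies $\theta$ and $\eta$ by maps to $B_SO(m+1)$ (passing to $BSO(m+1)\to BSO$ when $m+1$ is even so that the fibration is orientable --- this is exactly where the $w_1$ hypothesis is used), replaces the fiber of $B_SO(m+1)\to B_SO$ by its bottom Postnikov piece $K(G,m+1)$, and observes that the two lifts of the common stable classifying map differ by a single class in $H^{m+1}(X;G)=0$. You instead peel off one trivial summand at a time, running obstruction theory on the geometric sphere fibrations $S^{r}\to BO(r)\to BO(r+1)$ for $r\ge m+1$; the obstruction groups $H^{i}(X;\pi_i(S^r))$ vanish either because $\pi_i(S^r)=0$ ($i<r$) or because $i\ge r\ge m+1>\dim X$. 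Both arguments are sound; yours is more elementary in that it needs no Moore--Postnikov factorization and no discussion of orientability of the stabilization fibration (the paper cites Nussbaum's thesis for the latter), and your closing observation is right: since the relevant obstruction groups die for purely dimensional reasons, irrespective of how the coefficients are twisted by $w_1$, the hypothesis $w_1(\theta)=w_1(\eta)=0$ in the even case is not actually needed --- it is an artifact of the paper's choice to work with an orientable (principal) fibration. The one bookkeeping point worth making explicit is that ``stably isomorphic'' together with equal ranks gives $\theta\oplus\varepsilon^N\approx\eta\oplus\varepsilon^N$ for a single $N$, so that your cancellation step applies verbatim $N$ times; you do say this, so there is no gap.
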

\begin{proof} Let $B_SO(m+1)$ (resp.~$B_SO$) be $BSO(m+1)$ (resp.~$BSO$) if $m+1$ is even, and $BO(m+1)$ (resp.~$BO$) if $m+1$ is odd, and let $G=\Z$ if $m+1$ is even, and $\zt$ if $m+1$ is odd.
  Let $f$ and $g$ be the maps $X\to B_SO(m+1)$ classifying $\theta$ and $\eta$, and $i:B_SO(m+1)\to B_SO$ the usual inclusion. The hypothesis is that $i\circ f\simeq i\circ g$. As a fibration, $i$ is orientable in the stable range. (e.g., \cite[Cor 5.2(iii)]{Nuss}) Thus $i$ has a Moore-Postnikov tower which, through dimension $m+1$,  is a fiber sequence
  $$K(G,m+1)\to B_SO(m+1)\to B_SO \to K(G,m+2).$$
 There is an action map $\mu:K(G,m+1)\times B_SO(m+1)\to B_SO(m+1)$, and $g=\mu(c\times f)$ for some $c:X\to K(G,m+1)$. Since $X$ is $m$-dimensional, $c$ is trivial, and hence $g\simeq f$.
  \end{proof}

\begin{proof}[Proof of Theorem \ref{pthm}] Part (a) is immediate from the proof of Corollary \ref{orcor}, which notes that if $n$ is odd and not diffeomorphic to $T^{n-3}$, then $w_1(\tau(\Mbar(\ell)))\ne0$, so $\Mbar(\ell)$ is not parallelizable. Part (b) when $n=6$ follows from Corollary \ref{orcor} and the well-known result (\cite{St}) that every compact orientable 3-manifold is parallelizable.

Now we prove part (b) when $n=10$. In this case $\Mbar(\ell)$ and $M(\ell)$ are 7-manifolds, and so the Atiyah-Hirzebruch spectral sequence gives a commutative diagram of short exact sequences

$$\begin{CD} 0@>>>H^4(\Mbar(\ell);\Z)@>\qbar>>\kot(\Mbar(\ell))@>>>F_2(\Mbar(\ell))@>>>0\\
@. @VVp_1^*V @VVp_2^*V@VVp_3^*V@.\\
0@>>>H^4(M(\ell);\Z)@>q>>\kot(M(\ell))@>>>F_2(M(\ell))@>>>0,\end{CD}$$
in which $F_2(\ )$ is an extension of $H^1(\ ;\zt)$ and $H^2(\ ;\zt)$, and hence is a group of order 2 or 4.

Since $p:M(\ell)\to \Mbar(\ell)$ is a double cover, $2\ker(p_1^*)=0$. The pullback $p^*(\xi)$ is a trivial bundle, and hence $p_2^*([\xi])=0$. [\![This pullback bundle is
$$\{(x,x,t),(x,\phi(x),t)\in M(\ell)\times M(\ell)\times\R\}/(x,x,t)\sim(x,\phi(x),-t),$$
which maps to $\Mbar(\ell)\times\R$ by sending $(x,x,t)$ and $(x,\phi(x),-t)$ to $([x],t)$.]\!]

There exists $\a\in H^4(\Mbar(\ell))$ such that $\qbar(\a)=[4\xi]$. Since $p_2^*([4\xi])=0$, the diagram implies that $\a\in\ker(p_1^*)$, and hence $2\a=0$. Thus $[8\xi]=0$, and hence $\tau(\Mbar(\ell))$ is stably trivial by Theorem \ref{tanbdlthm}. Thus $\Mbar(\ell)$ is stably parallelizable, and hence is parallelizable by \cite{BK}, which shows that a stably parallelizable 7-manifold is parallelizable.

Part (c) follows from Corollary \ref{thm} and Theorem \ref{R2thm} together with the observation that (a) if $\Mbar(\ell)$ has genetic code $\{n,n-3,\ldots,1\}$, then $\Mbar(\ell)$ is diffeomorphic to $T^{n-3}$, which is parallelizable, (b) if $\Mbar(\ell)$ has genetic code $\{n,n-4,\ldots,1\}$, then it is diffeomorphic to the $(n-3)$-dimensional Klein bottle of \cite{Kn}, which was shown there to be parallelizable if and only if $n-3$ is odd, and (c) if $\ell=(0^{n-5},1,1,1,2,2)$, then the genetic code of $\Mbar(\ell)$ is $\{n,n-2,n-5,\ldots,1\}$.\end{proof}

\begin{thm}\label{R2thm} If $n\ge5$, then $R^2=0$ in $H^*(\Mbar(\ell);\zt)$ iff the genetic code of $\Mbar(\ell)$ is $\{n,n-3,\ldots,1\}$, $\{n,n-4,\ldots,1\}$, or $\{n,n-2,n-5,\ldots,1\}$.\end{thm}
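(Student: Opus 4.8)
The plan is to translate the vanishing of $R^2$ into a combinatorial statement about the genetic code, using the known description of $H^*(\Mbar(\ell);\zt)$ and in particular the class $R = w_1(\xi)$ in terms of genes. First I would recall from \cite{HR} (and the cohomology computations in \cite{Dcoh}) the presentation of $H^*(\Mbar(\ell);\zt)$: it has a generator $R$ in degree $1$ together with generators $V_J$ (or $V_i$, in Hausmann--Rodriguez notation) indexed by subsets $J$ of $[n]$ containing $n$ that are ``short'' (the complement of a long set), and the relations are controlled by the genetic code. The key point is that $R^k \ne 0$ precisely when there is enough ``room'' in the poset of short subsets, and $R^2 = 0$ is the most restrictive nonvanishing condition short of $R=0$ itself. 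I would state the precise criterion for $R^k = 0$ that is developed in Section \ref{Rsec} of this paper (the paper promises exactly such a genetic-code criterion there), specialize it to $k=2$, and read off which genetic codes satisfy it.

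The main steps, in order, are: (1) record the genetic-code presentation of $H^*(\Mbar(\ell);\zt)$ and identify $R$ and the submodule generated by $1, R, R^2, \ldots$; (2) establish the combinatorial equivalence ``$R^2 = 0$ in $H^*(\Mbar(\ell);\zt)$'' $\iff$ ``every gene of $\Mbar(\ell)$ has at most two elements other than $n$'' (or the appropriate sharp bound — this is the content one extracts from the $R^k$-criterion with $k=2$); (3) enumerate the genetic codes meeting this bound. For step (3), a genetic code is a set of genes, each gene a subset of $[n]$ containing $n$; the ``baseline'' codes with only small genes are $\{n, n-3, \ldots, 1\}$ (the torus $T^{n-3}$), $\{n, n-4, \ldots, 1\}$ (the Klein bottle), and $\{n, n-2, n-5, \ldots, 1\}$ (the space $\Mbar(0^{n-5},1,1,1,2,2)$ of Theorem \ref{pthm}(c)). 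I would check directly that each of these three has $R^2 = 0$ by exhibiting the cohomology ring (for the torus and Klein bottle, $H^*$ is standard; for the third, one computes from the genetic code that $R^2$ is already a relation), and conversely that any genetic code containing a gene which is ``too large'' forces $R^2 \ne 0$, using the partial order on genes (genes are incomparable by definition, which limits the possibilities sharply once the size bound is imposed).

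The hard part will be step (2): pinning down the exact genetic-code condition equivalent to $R^2 = 0$ and proving both directions. The forward direction (if the genetic code is one of the three listed, then $R^2 = 0$) is a finite check in each case and should be routine given the cohomology presentation. The reverse direction is the substantive one: one must show that any $\Mbar(\ell)$ whose genetic code is \emph{not} among the three has a gene forcing a nonzero $R^2$, which requires understanding how a single gene contributes a nonzero monomial $R^2 V_J$ or $R^2$ itself to the top of the $R$-tower. I expect this to follow from the structure of the Poincaré duality algebra $H^*(\Mbar(\ell);\zt)$ together with the observation that $R^{n-3}\ne 0$ exactly in the ``one long gene'' case $RP^{n-3}$ (Corollary \ref{cor}), interpolating to give the $R^2$ statement; the delicate bookkeeping is ruling out, via incomparability of genes, all genetic codes other than the three baseline ones once the relevant size constraint is in force. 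I would organize this as: assume $R^2 = 0$; deduce a bound on the size of every gene; then classify incomparable families of subsets of $[n]$ (all containing $n$) subject to that bound, obtaining exactly the three codes.
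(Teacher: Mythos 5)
Your proposal has the central combinatorial criterion backwards, and this is a fatal gap rather than a fixable slip. In the presentation of $H^*(\Mbar(\ell);\zt)$ (Proposition \ref{gprop}), the degree-$2$ relations $\cR_S$ arise only from subgees $S$ with $|S|\ge n-4$; if every gee is small, there are \emph{no} relations among the spanning monomials of $H^2$ and hence $R^2\ne0$ automatically. So $R^2=0$ forces the presence of a \emph{large} gee (of size $n-3$ or $n-4$), not a bound of the form ``every gene has at most two elements other than $n$.'' Indeed the three codes in the statement, read as gees, are $\{n-3,\ldots,1\}$, $\{n-4,\ldots,1\}$, and $\{n-2,n-5,\ldots,1\}$ --- the largest gees that can occur, not the smallest. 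Your proposed step (2), and the enumeration in step (3) built on it, would therefore classify the wrong family of codes. You also lean on a ``criterion for $R^k=0$'' from Section \ref{Rsec}, but that section only gives a matrix-rank algorithm for deciding $R^{n-3}$ and $R^{n-4}$ for a given code; it is not a structural genetic-code criterion one can specialize to $k=2$.

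The two ingredients you are missing are: (i) a lemma (Lemma \ref{Glem} in the paper) exploiting the short/long dichotomy of length vectors to show that a subset $G\subseteq[n-1]$ of size $\ge n-4$ can be a subgee only if it is one of five explicit types --- this is what makes the problem a finite case analysis; and (ii) for each of those five types, a computation of exactly which linear combinations of the relations $\cR_S$ can produce $R^2=0$, including the delicate check of whether \emph{accompanying} gees in the same genetic code add terms (such as $V_{n-1}^2$) that destroy the relation, or fail to exist at all by incomparability. Your appeal to Poincar\'e duality and interpolation from the $R^{n-3}$ statement of Corollary \ref{cor} does not substitute for either ingredient: nonvanishing of $R^{n-3}$ neither implies nor is implied by nonvanishing of $R^2$ in any way your sketch makes precise.
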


In order to prove Theorem \ref{R2thm}, we recall, in Proposition \ref{gprop}, our interpretation (\cite[Thm 2.1]{D3}) of Hausmann-Knudson's determination (\cite{HK}) of the algebra $H^*(\Mbar(\ell);\zt)$.
All sets in a genetic code contain the integer $n$, and so we say that a {\it gee} is a gene with the $n$ omitted, and a geetic code is the genetic code without listing $n$ in each of the sets. There is a partial order on the set of genes or gees by $S=\{s_1,\ldots,s_k\}\le T$ if $T$ contains a subset $\{t_1,\ldots,t_k\}$ with $s_i\le t_i$ for all $i$. A {\it subgee} is any set of positive integers which is $\le$ some gee, and gees are the maximal subgees.

  \begin{prop}\label{gprop} $H^*(\Mbar;\zt)$ is generated by 1-dimensional classes $R$ and $V_1,\ldots,V_{n-1}$ with only relations as below, where $V_S:=\ds\prod_{i\in S}V_i$,
  \begin{itemize}
  \item $V_S$ is zero unless $S$ is a subgee;
  \item $V_i^2=RV_i$;
  \item If $S$ is a subgee with $|S|\ge n-2-d$, then
  $$\sum_{T\notint S}R^{d-|T|}V_T=0.$$
  \end{itemize}
  \end{prop}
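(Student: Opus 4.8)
The plan is to derive this as a translation of known results: the presentation is essentially the content of \cite[Thm 2.1]{D3}, which repackages the Hausmann--Knudson computation \cite{HK} of $H^*(\Mbar(\ell);\zt)$ into the combinatorial language of genes, gees, and subgees. The proof thus consists of recalling the Hausmann--Knudson presentation and matching it, relation by relation, with the three bullets.

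First I would recall the Hausmann--Knudson description: after singling out the index $n$, the ring is a quotient of the polynomial algebra $\zt[R,V_1,\dots,V_{n-1}]$ on one-dimensional generators, modulo an ideal generated by the squaring relations together with families of relations indexed by the long subsets of $[n]$ (those $L$ with $\ds\sum_{i\in L}\ell_i>\ds\sum_{i\notin L}\ell_i$). The precise generators of this ideal are as listed in \cite{HK}; among them the squaring relation $V_i^2=RV_i$ appears directly, which is the second bullet.

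The crux is the combinatorial dictionary. I would recall from \cite{HR} how the genetic code encodes the collection of long subsets through the domination order $\le$, so that the genes are the distinguished minimal long sets (each containing $n$) and a subgee is exactly a set $\le$ some gee. Using this, the first bullet reduces to checking that the Hausmann--Knudson monomial relations force $V_S=0$ whenever $S$ fails to be a subgee. For the third bullet I would take the relevant Hausmann--Knudson relation --- the one whose complement data matches a subgee $S$ with $|S|\ge n-2-d$ --- and reduce every monomial appearing in it to squarefree form using $V_i^2=RV_i$; degree-counting then forces the exponent $d-|T|$ and the disjointness range $T\notint S$.

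I expect the main obstacle to be exactly this last reduction together with the completeness claim. On the one hand I must verify that, after collapsing squares via $V_i^2=RV_i$, the Hausmann--Knudson relations reorganize into precisely the stated family indexed by subgees $S$ with $|S|\ge n-2-d$, with the correct power $R^{d-|T|}$ and the correct summation range $\{T:T\notint S\}$, and that no relation is lost or spuriously introduced. On the other hand, because the statement asserts these are the \emph{only} relations, I would need to confirm that the quotient by the listed relations has the right dimension in each degree --- for instance by exhibiting the admissible-monomial additive basis of \cite{D3} and comparing it with the known Poincar\'e polynomial of $\Mbar(\ell)$ --- so that the presentation is complete rather than merely valid.
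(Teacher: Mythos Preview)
Your proposal is correct and matches the paper's treatment: the paper does not prove Proposition~\ref{gprop} but simply recalls it as \cite[Thm~2.1]{D3}, the author's repackaging of the Hausmann--Knutson presentation \cite{HK} in the language of genes and subgees. Your outline of how the translation goes (matching the squaring relations, the monomial relations, and the long-subset relations to the three bullets) is more detailed than what the paper offers, but the underlying approach is identical.
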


  \ni We denote the third relation here by $\cR_S$.

For the rest of this section, we are dealing with $n$-gon spaces.
\begin{defin} For $G\subseteq[n-1]$, let $\Gt=[n-1]-G$ and $\Gbar=\Gt-\max\{i:\ i\in\Gt\}$.\end{defin}
\begin{lem} If $G_1$ and $G_2$ are subsets of $[n-1]$ (possibly equal) with $G_2\ge \Gbar_1$, then $G_1$ and $G_2$ cannot both be subgees of the same genetic code.\label{Glem}\end{lem}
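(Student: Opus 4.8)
The plan is to pass to the combinatorial model of genetic codes in terms of short subsets. Recall (as in \cite{HR}, \cite{D3}) that a genetic code is defined relative to a sorted generic length vector $\ell_1\le\cdots\le\ell_n$; call a subset $S\subseteq[n]$ \emph{short} if $\sum_{i\in S}\ell_i<\tfrac12\sum_{i=1}^n\ell_i$ and \emph{long} if $\sum_{i\in S}\ell_i>\tfrac12\sum_{i=1}^n\ell_i$, so that genericity makes every $S$ short or long, and $S$ is short iff $[n]-S$ is long. Two elementary facts do all the work. (i) If $A\le B$ in the order $\le$ defined above --- i.e.\ $B$ has a subset $B'$ with $|B'|=|A|$ whose $j$-th smallest element is $\ge$ the $j$-th smallest element of $A$ for every $j$ --- then $\sum_{i\in A}\ell_i\le\sum_{i\in B'}\ell_i\le\sum_{i\in B}\ell_i$, because $\ell$ is nondecreasing with positive entries. (ii) Consequently the family of short subsets is downward closed under $\le$. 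From (ii) comes the dictionary I will use: $G\subseteq[n-1]$ is a subgee of the genetic code of $\Mbar(\ell)$ if and only if $G\cup\{n\}$ is short. (If $G\le g$ for a gee $g$ then $G\cup\{n\}\le g\cup\{n\}$, a gene, hence short; conversely a short set containing $n$ lies $\le$ a maximal one, i.e.\ a gene, and deleting $n$ exhibits $G$ as a subgee. Only the first implication is used below.)

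Granting this, the lemma reduces to one inequality. Suppose $G_1$ and $G_2$ are both subgees of a single genetic code, realized by a sorted generic $\ell$, and that $G_2\ge\Gbar_1$. Since $\Gbar_1$ is defined, $\Gt_1=[n-1]-G_1$ is nonempty; put $m:=\max\{i:i\in\Gt_1\}$, so $m\le n-1$ and $\Gt_1=\Gbar_1\cup\{m\}$. Because $G_1$ is a subgee, $G_1\cup\{n\}$ is short, hence its complement $[n]-(G_1\cup\{n\})=\Gt_1=\Gbar_1\cup\{m\}$ is long; that is,
$$\sum_{i\in\Gbar_1}\ell_i+\ell_m\;=\;\sum_{i\in\Gt_1}\ell_i\;>\;\tfrac12\sum_{i=1}^n\ell_i.$$
On the other hand $n\notin G_2$, so applying fact (i) to $\Gbar_1\le G_2$ and using $\ell_n\ge\ell_m$ (valid since $m\le n-1$),
$$\sum_{i\in G_2\cup\{n\}}\ell_i\;=\;\sum_{i\in G_2}\ell_i+\ell_n\;\ge\;\sum_{i\in\Gbar_1}\ell_i+\ell_m\;>\;\tfrac12\sum_{i=1}^n\ell_i.$$
Thus $G_2\cup\{n\}$ is long, so by the dictionary $G_2$ is not a subgee of that code --- a contradiction. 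The case $G_1=G_2$ needs no change, and if $\Gbar_1=\emptyset$ (which in fact cannot occur for a nonempty $\Mbar(\ell)$) the hypothesis $G_2\ge\Gbar_1$ is vacuous but the second display still reads $\sum_{i\in G_2\cup\{n\}}\ell_i\ge\ell_n\ge\ell_m>\tfrac12\sum_{i=1}^n\ell_i$.

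The argument is purely combinatorial: neither Proposition~\ref{gprop} nor any geometry enters, only the sorted-length description of genetic codes. The single point deserving care is setting up the short/long dictionary cleanly --- in particular the downward-closedness of short subsets under $\le$ and the equivalence between subgees and short subsets containing $n$ --- after which I expect no real obstacle.
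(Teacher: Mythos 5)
Your proof is correct and follows essentially the same route as the paper's: translate ``subgee'' into ``$G\cup\{n\}$ is short,'' note that $G_1$ being a subgee forces $\Gt_1=\Gbar_1\cup\{m\}$ to be long, upgrade this to $\{n\}\cup\Gbar_1$ long via $\ell_n\ge\ell_m$, and then to $\{n\}\cup G_2$ long via monotonicity of the partial order on sorted lengths. The only difference is that you spell out the inequalities and the downward-closedness of short sets explicitly, which the paper leaves implicit.
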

\begin{proof} This is the only place that we need to know the relationship between genes and length vectors. A subset $S$ of $[n]$ is defined to be {\it short} if $\ds\sum_{i\in S}\ell_i<\sum_{i\not\in S}\ell_i$. Then a set $T\subset[n-1]$ is a subgee iff $T\cup\{n\}$ is short. If $G_1$ is a subgee, then $\Gt_1$ is not short (called {\it long}). Therefore $\{n\}\cup\Gbar_1$ is long, and hence so is $\{n\}\cup G_2$. Thus $G_2$ is not a subgee.\end{proof}

\begin{proof}[Proof of Theorem \ref{R2thm}] We introduce some notational shortcuts: $k,,1$ for $k,\ldots,1$ and $k,,\ihat,,1$ for $k,\ldots,i+1,i-1,\ldots,1$. Basically a double comma means $,\ldots,$; i.e., include all intermediate numbers.
Also, all cohomology groups have coefficients in $\zt$.

By Proposition \ref{gprop}, the relations in $H^2(\Mbar(\ell))$ are associated to subgees of size $\ge n-4$. In the next paragraph, we will show that the only possible subgees of size $\ge n-4$ are (a) $\{n-3,,1\}$; (b) $\{n-4,,1\}$;
(c) $\{n-3,,\ihat,,1\}$ with $\{1\le i<n-3\}$; (d) $\{n-2,n-5,,1\}$; and (e) $\{n-1,n-5,,1\}$.

We show that $G\ge \Gbar$ in all other cases, and so $G$ is not a subgee by Lemma \ref{Glem}. If $G=[n-1]-\{i\}$, then $\Gbar=\emptyset$, so $G\ge\Gbar$. If $G=[n-1]-\{i,j\}$, $i>j$, then $\Gbar=\{j\}$, and so $G\ge\Gbar$ unless $(i,j)=(n-1,n-2)$. If $G=[n-1]-\{i,j,k\}$, $i>j>k$, then  $\Gbar=\{j,k\}$, and so $G\ge \Gbar$ unless $(i,j)=(n-1,n-2)$ or $(j,k)=(n-3,n-4)$.

The group $H^2(\Mbar(\ell))$ is spanned by $R^2$ and all $V_iV_j$, where $1\le i\le j\le k_0$, where $k_0$ is the largest integer contained in any of the gees of $\ell$. If the geetic code of $\ell$ does not have any gees of length $\ge n-4$, then there are no relations among these classes, and so $R^2\ne0$.
We now consider geetic codes having a gee of type (a) through (e) above, plus perhaps other gees.

{\bf Case (a)}: If $G=\{n-3,,1\}$ appears alone in the geetic code, then $H^2(\Mbar(\ell))$ is spanned by $R^2$ and $V_iV_j$, $1\le i\le j\le n-3$. Since none of these sets $\{i,j\}$ is disjoint from $G$, the relation $\cR_G$ is exactly $R^2=0$. This $G$ (as $G_1$) cannot be accompanied in a geetic code by Lemma \ref{Glem}, because an accompanying $G_2$ cannot satisfy $G_2\ge\Gbar=\{n-2\}$, but any such $G_2$ is $\le G_1$, and hence cannot appear separately in the geetic code, since geetic codes only include maximal subgees.

The remaining cases deal with the situation when the largest gee has size $n-4$. Note that the possible subgees of size $n-4$ are totally ordered by $\ge$, and so the set of subgees of size $n-4$ will be exactly those which are $\le $ the single gee of size $n-4$.

{\bf Case (b)}: If $G=\{n-4,,1\}$ appears alone in the geetic code, then the relation $R^2=0$ is obtained from $\cR_G$ as in Case (a). This $G$ can be accompanied in the geetic code by gees $G'$ containing an integer $i>n-4$, but such $G'$ must have length $<n-4$, else it would be $>G$, contradicting maximality of $G$. Thus $G'$ does not add a new relation, but now $\cR_G$ says $0=R^2+V_i^2$ (plus possibly other $V_j^2$). Hence $R^2\ne0$.

{\bf Case (c)}: If $G=\{n-3,,\ihat,,1\}$ appears alone in the geetic code, it will have relations $\cR_{n-3,,\jhat,,1}$ for all $j\le i$, which is $R^2+V_j^2=0$, since there are no 2-subsets of $[n-3]$ disjoint from $G$. Clearly, no combination of these relations can yield $R^2=0$. This $G$ can be accompanied in the geetic code, but, as noted above, not by $G'$ of size $\ge n-4$. Thus there are no additional relations. The accompanying $G'$'s may add additional basis elements to $H^2(\Mbar(\ell))$, such as $V_{n-2}^2$, but these will not affect the impossibility, already noted, of obtaining $R^2=0$ as a consequence of the relations.

{\bf Case (d)}: If $G=\{n-2,n-5,,1\}$ appears alone in the geetic code, it will have relations $\cR_G$, $\cR_{n-3,n-5,,1}$, and $\cR_{n-4,,1}$. Then $H^2(\Mbar(\ell))$ is spanned by classes $R^2$, $V_1^2,\ldots,V_{n-2}^2$, and $V_iV_j$ with $i>j$ and $j\le n-5$. No $V_iV_j$ appears in any of the relations. The three relations are $R^2+V_{n-3}^2+V_{n-4}^2$, $R^2+V_{n-2}^2+V_{n-4}^2$, and $R^2+V_{n-2}^2+V_{n-3}^2$. Adding yields $R^2=0$. This $G$ can be accompanied in the geetic code, but an accompanying $G'$ cannot be $\ge\Gbar=\{n-3,n-4\}$ by the lemma, and so its second largest element must by $\le n-5$. It must contain $n-1$, else it would be $<G$. Then the three relations all contain the term $V_{n-1}^2$, and so their sum is no longer just $R^2$.

{\bf Case (e)}: If $G=\{n-1,n-5,,1\}$ appears alone in the geetic code, there are four relations, each of the form $R^2+T$, where $T$ is the sum of any three of $\{V_{n-1}^2,V_{n-2}^2,V_{n-3}^2,V_{n-4}^2\}$. No combination of these can equal $R^2$. Any $G'$ which would accompany $G$ in the geetic code cannot be $\ge\Gbar=\{n-3,n-4\}$, so its second largest element must be $\le n-5$, and so it is $\le G$. Thus this $G$ cannot be accompanied in the geetic code.
\end{proof}

\section{Specific results for 7-gon spaces}\label{Rsec}
The genetic codes of the 134 7-gon spaces $\Mbar(\ell)$ are listed in \cite{web}. These are connected 4-manifolds, and we can use {\tt Maple} to determine for each whether $R^4\ne0$ (which is, by Corollary \ref{cor}, equivalent to it being cobordant to $RP^4$) and whether $R^3\ne0$ (which is equivalent to having Corollary \ref{cor3} imply that it does not immerse in $\R^6$). We first state the results, and then describe the algorithm.
\begin{prop} Of the $134$ $7$-gon spaces, $72$ are cobordant to $\emptyset$, and $62$ are cobordant to $RP^4$.\end{prop}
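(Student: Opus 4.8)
The plan is to turn the problem into a finite computation over the 134 genetic codes listed in \cite{web}, using Corollary \ref{cor} to reduce the question to deciding, for each 7-gon space, whether $R^{4}\ne0$ in $H^{*}(\Mbar(\ell);\zt)$. Since $n=7$, we have $n-3=4$, and Corollary \ref{cor} says that $\Mbar(\ell)$ is null cobordant precisely when $R^{4}=0$ and is cobordant to $RP^{4}$ precisely when $R^{4}\ne0$. So the proposition reduces to the assertion that, among the 134 genetic codes, exactly $62$ have $R^{4}\ne0$ and the remaining $72$ have $R^{4}=0$.

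The computational engine is Proposition \ref{gprop}: $H^{*}(\Mbar(\ell);\zt)$ is presented by generators $R,V_{1},\dots,V_{6}$ subject to $V_{S}=0$ unless $S$ is a subgee, $V_{i}^{2}=RV_{i}$, and the relations $\cR_{S}$ for subgees $S$ with $|S|\ge n-2-d=5-d$. First I would, for each genetic code, compute its set of gees (genes with $7$ deleted) and then the full down-closure to obtain all subgees, which is a routine poset computation under the order $\le$ defined in the excerpt. From the subgees one reads off a monomial basis for $H^{2}(\Mbar(\ell))$ and $H^{3}(\Mbar(\ell))$: these are spanned by the squarefree monomials $R^{a}V_{S}$ with $S$ a subgee (using $V_{i}^{2}=RV_{i}$ to reduce), modulo the linear span of the relations $\cR_{S}$ for subgees $S$ of the appropriate size. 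Concretely, a basis for $H^{4}$ before imposing relations consists of $R^{4}$ together with $R^{4-|S|}V_{S}$ over all subgees $S$ with $1\le|S|\le4$; the relations $\cR_{S}$ for subgees $S$ with $|S|\ge 5-d$ (for $d=1,2,3,4$, appropriately interpreted in degree $4$) cut this span down, and one simply asks whether the image of $R^{4}$ in the quotient is nonzero. This is linear algebra over $\zt$, implemented in {\tt Maple} exactly as sketched at the start of Section \ref{Rsec}.

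The main obstacle is not conceptual but one of correctness and bookkeeping: one must be careful to (i) generate \emph{all} subgees, not merely the gees, since the relations $\cR_{S}$ and the basis both range over subgees; (ii) impose the relations $\cR_{S}$ for \emph{every} subgee of the requisite size, including those strictly below a maximal gee; and (iii) correctly reduce products using $V_{i}^{2}=RV_{i}$ and $V_{S}=0$ for non-subgees before testing whether $R^{4}$ survives. A useful internal consistency check, which I would carry out, is to verify that the output is compatible with Theorem \ref{R2thm}: the three genetic codes with $R^{2}=0$ listed there (for $n=7$: $\{7,4,,1\}$, $\{7,3,,1\}$, and $\{7,5,3,,1\}$) must appear among the $72$ null-cobordant cases, and more generally any $\Mbar(\ell)$ diffeomorphic to $T^{4}$ must be null cobordant. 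A second check is that the $72+62=134$ count matches Table \ref{T1}.

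Finally, I would record the algorithm explicitly (as promised in the excerpt) so that the reader can reproduce the split: input a geetic code, form the subgee poset, build the $\zt$-vector space $H^{4}(\Mbar(\ell))$ as a quotient of the free span on $\{R^{4}\}\cup\{R^{4-|S|}V_{S}:S\text{ a subgee}, 1\le|S|\le4\}$ by the subspace generated by the relations $\cR_{S}$ (with the products $V_{S}\cdot V_{T}$ reduced via $V_{i}^{2}=RV_{i}$ and the vanishing of non-subgees), and test whether $R^{4}\neq 0$. Running this over the 134 codes of \cite{web} yields the claimed counts $72$ and $62$, and the subsequent paragraph of Section \ref{Rsec} will describe this algorithm in the detail needed to also decide the $R^{3}\ne0$ question relevant to Corollary \ref{cor3}.
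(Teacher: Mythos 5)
Your proposal is correct and follows essentially the same route as the paper: reduce via Corollary \ref{cor} to testing whether $R^{4}\ne0$, then use the presentation of Proposition \ref{gprop} to set up a finite $\zt$-linear-algebra computation over the subgees of each of the 134 genetic codes, executed in {\tt Maple}. The paper streamlines the final test by observing that $\dim H^{n-3}=1$ forces the $(r-1)\times r$ presentation matrix (entry $1$ iff the row-subgee and column-subgee are disjoint) to have rank $r-1$, so $R^{n-3}=0$ iff deleting the $R$-column drops the rank to $r-2$, but this is only a tidier implementation of the quotient-space test you describe.
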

\begin{prop} Of the $134$ $7$-gon spaces, $122$ have $R^3\ne0$ and hence cannot be immersed in $\R^6$. The ones with $R^3=0$ are those with geetic codes
$$21,\ 41,\ 61,\ 65,\ 321,\ 421,\ 521,\ 621, \ 4321,\ \{321,51\},\ \{421,61\},\ \{431,51\}.$$
\end{prop}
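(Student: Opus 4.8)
The first assertion follows from Corollary~\ref{cor3} applied with $n=7$ and $e=2$: then $2^e+3=7=n\le 2^{e+1}=8$, while $2^{e+1}+2-n=3$ and $2^{e+1}-2=6$, so every $7$-gon space with $R^3\ne0$ fails to immerse in $\R^6$. Hence the whole proposition reduces to deciding, for each of the $134$ geetic codes listed in \cite{web}, whether $R^3=0$ in $H^*(\Mbar(\ell);\zt)$. By Proposition~\ref{gprop} this depends only on the geetic code and is a finite problem in linear algebra over $\zt$.

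Fix a $7$-gon geetic code; its gees lie in $[n-1]=[6]$, and one first forms the list of all subgees by testing the partial order against each gee. By Proposition~\ref{gprop}, after applying $V_i^2=RV_i$ and $V_T=0$ for non-subgees $T$, the group $H^3(\Mbar(\ell);\zt)$ is spanned by $R^3$ together with those monomials $R^2V_i$, $RV_iV_j$ with $i<j$, and $V_iV_jV_k$ with $i<j<k$ whose index sets $\{i\}$, $\{i,j\}$, $\{i,j,k\}$ are subgees. The degree-$3$ part of the ideal of relations is spanned by all products $m\cdot\cR_S$, where $S$ ranges over subgees, $\cR_S$ over the relations of Proposition~\ref{gprop} attached to $S$ of degree $d\le 3$ ($d=3$ when $|S|=2$; $d\in\{2,3\}$ when $|S|=3$; $d\in\{1,2,3\}$ when $|S|=4$), and $m$ over monomials in $R,V_1,\ldots,V_6$ of complementary degree $3-d$; each product is then rewritten in the spanning set above via $V_i^2=RV_i$ and the deletion of $V_T$ for non-subgees $T$. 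Forming the $\zt$-matrix with these reduced relations as rows and row-reducing, one obtains $R^3=0$ exactly when the coordinate vector of $R^3$ lies in the row span. (Using Lemma~\ref{Glem} one can check that only the lowest-degree $\cR_S$ for each $S$ is actually needed — the terms by which the higher-degree ones differ involve $V_T$ for sets $T$ that Lemma~\ref{Glem} rules out as subgees — but an implementation may simply include all of them.)

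Running this in \texttt{Maple} over the $134$ geetic codes of \cite{web} returns exactly the twelve exceptional codes displayed, hence $134-12=122$ spaces with $R^3\ne0$; with the first paragraph this proves the proposition. The delicate point, and essentially the only place an error could enter, is the generation of the degree-$3$ relations: each $\cR_S$ must be multiplied by precisely the correct family of complementary monomials, $V_i^2=RV_i$ must be applied whenever a square is produced, and $V_T$ must be set to $0$ for every non-subgee $T$ (this last being the first relation of Proposition~\ref{gprop}, which also subsumes the relations $V_S=0$ for minimal non-subgees, since no such monomial is ever placed in the spanning set). Everything downstream is routine finite linear algebra over $\zt$.
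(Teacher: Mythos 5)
Your proposal is correct and is in the same spirit as the paper's: both reduce the question to finite linear algebra over $\zt$ via the presentation of Proposition \ref{gprop} and let {\tt Maple} decide each of the $134$ cases, and your application of Corollary \ref{cor3} with $e=2$ is exactly the intended source of the nonimmersion. The one methodological difference is in how the relation matrix is assembled. You generate the entire degree-$3$ piece of the relation ideal by multiplying every relation $\cR_S$ of degree $d\le 3$ by all complementary monomials and then reducing; the paper instead uses only the degree-$3$ relations $\cR_T$ for subgees $T$ with $|T|\ge2$, encoded as a matrix with one column per subgee $S$ (representing $R^{3-|S|}V_S$) and entry $1$ iff $S\cap T=\emptyset$, and justifies that these rows already span all relations by the dimension count $\dim H^{3}(\Mbar(\ell);\zt)=\dim H^{1}(\Mbar(\ell);\zt)$, which turns the test into a pure rank computation: $R^3=0$ iff deleting the $R$-column lowers the rank. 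Your larger, redundant matrix buys independence from that Poincar\'e-duality bookkeeping at the cost of more rows; either way the verification is a computation, and both produce the same twelve exceptional codes.
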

\ni Here we concatenate, and omit $\{-\}$ from monogenic codes; e.g., $421$ means $\{4,2,1\}$.

By Proposition \ref{gprop}, a presentation matrix for $H^{n-3}(\Mbar(\ell);\zt)$ has columns (generators) for all subgees $S$ (including $\emptyset$, which corresponds to $R^{n-3}$) and rows (relations) for all subgees $T$ except $\emptyset$. An entry is 1 iff $S$ and $T$ are disjoint, else 0. We know that $\dim(H^{n-3}(\Mbar(\ell);\zt))=1$, and so, if this presentation matrix is $(r-1)$-by-$r$, then its rank is $r-1$. Then $R=0$ iff a row-reduced form of the matrix has a row with its only 1 being in the $R$-column, and this is true iff, when the $R$-column is omitted, the matrix has rank $r-2$. So we just form the matrix without the $R$-column and ask {\tt Maple} whether its rank (over $\zt$) is less than its number of columns.

Similarly, $H^{n-4}(\Mbar(\ell);\zt)$ has the same columns, but now rows for all subgees of size $\ge2$, filled in according to the same prescription. We know that $$\dim(H^{n-4}(\Mbar(\ell);\zt))=\dim(H^1(\Mbar(\ell);\zt)),$$ and this equals the number of subgees of size $\le1$, and so $R^{n-4}=0$ iff, when the $R$-column is removed, the rank of the resulting matrix is one less than its number of rows.

In \cite{Dcoh}, we determined a formula for $R^{n-3}$ in a monogenic genetic code, for arbitrary $n$. As noted in Corollary \ref{cor}, the mod-2 value of $R^{n-3}$ determines
whether $\Mbar(\ell)$ is cobordant to $\emptyset$ or to $RP^{n-3}$.
\begin{prop}\label{Dprop} If the genetic code of $\Mbar(\ell)$ is $\{n,g_1,\ldots,g_k\}$, let $a_i=g_i-g_{i+1}>0$ $(a_k=g_k)$. Then $$R^{n-3}=\sum_B\prod_{i=1}^k\tbinom{a_i+b_i-2}{b_i}\in\zt,$$
where $B=(b_1,\ldots,b_k)$ ranges over all $k$-tuples of nonnegative integers satisfying
$b_1+\cdots+b_\ell\le\ell$ for $1\le\ell\le k$ with equality if $\ell=k$.\end{prop}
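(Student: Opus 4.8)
The plan is to derive this formula (obtained in \cite{Dcoh}) directly from the presentation in Proposition~\ref{gprop}, using only the combinatorics of a monogenic code. Write $g=\{g_1>\cdots>g_k\}$ for the single gee. I would begin with two elementary observations. First, the subgees are exactly the subsets $T$ of $\{1,\dots,g_1\}$ with $T\le g$, and since $T'\subseteq T$ forces $T'\le T$, they form a downward-closed family $\mathcal D$ inside the Boolean lattice on $\{1,\dots,g_1\}$. Second, $k\le n-3$ for $\Mbar(\ell)\neq\emptyset$ (a gee of size $n-1$ is impossible, and one of size $n-2$ would force one side length to exceed the sum of the others, emptying $M(\ell)$), so every monomial $R^{n-3-|S|}V_S$ with $S$ a subgee is defined, and by Proposition~\ref{gprop} together with $V_i^2=RV_i$ these monomials span $H^{n-3}(\Mbar(\ell);\zt)$. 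As $\Mbar(\ell)$ is a connected closed $(n-3)$-manifold, that group is $\zt$; write $c_S\in\zt$ for the class of $R^{n-3-|S|}V_S$, so that $R^{n-3}=c_\emptyset$ and the family $(c_S)$ is not identically zero.

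Next I would produce linear relations among the $c_S$. For a subgee $S$ and a proper subset $S'\subsetneq S$, take $\cR_S$ in cohomological degree $n-3-|S'|$ (legitimate since $|S|\ge|S'|+1$) and multiply by $V_{S'}$; using $V_{S'}V_T=V_{S'\cup T}$ for $T$ disjoint from $S'$, the vanishing of $V_X$ whenever $X$ is not a subgee or $|X|>n-3$, and the substitution $U=S'\cup T$, this collapses to $\sum_{U\cap S=S'}c_U=0$ (sum over subgees $U$; the case $S'=\emptyset$ is $\cR_S$ itself in degree $n-3$). Summing over all $S'\subsetneq S$ yields $\sum_{U\not\supseteq S}c_U=0$, hence $\sum_{U\supseteq S}c_U=\Sigma$ for every subgee $S$, where $\Sigma:=\sum_U c_U$. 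Möbius inversion over $\mathcal D$—whose intervals are Boolean intervals, since $\mathcal D$ is an order ideal—gives $c_S=\Sigma\sum_{T\in\mathcal D,\ T\supseteq S}(-1)^{|T|-|S|}$; since some $c_S$ is nonzero we must have $\Sigma=1$, and therefore $R^{n-3}=c_\emptyset=\bigl(\sum_{T\in\mathcal D}(-1)^{|T|}\bigr)\bmod 2=(\#\mathcal D)\bmod 2$. Thus $R^{n-3}$ equals the number of subgees of $g$ mod $2$ (in particular it depends only on $g$, not on $n$).

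What remains is to identify this count with the stated sum, and this is the step I expect to be the main obstacle, being a purely combinatorial identity. Reading $\binom{a_i+b_i-2}{b_i}$ as the number of weak compositions of $b_i$ into $a_i-1$ parts, one must show that $\sum_B\prod_{i=1}^k\binom{a_i+b_i-2}{b_i}$, over the $B$ obeying the stated partial-sum constraints, is congruent mod $2$ to the number of strictly decreasing sequences $t_1>t_2>\cdots>t_j\ge1$ of positive integers with $t_i\le g_i$ for all $i$, summed over $0\le j\le k$—that is, to $\#\mathcal D$. I would prove this either by a bijection (or sign-reversing involution) matching subgees with the data $(b_1,\dots,b_k)$, with the bound $\sum_{i\le\ell}b_i\le\ell$ encoding how far the strictly decreasing run $t_1>t_2>\cdots$ may get ahead of the decreasing bounds $g_1>g_2>\cdots>g_k$, or by a direct comparison of the two generating functions—only the equality of their mod-$2$ reductions being needed. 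Assembling the three steps proves the proposition.
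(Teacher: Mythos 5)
A preliminary remark: the paper offers no proof of Proposition \ref{Dprop}; it simply quotes the formula from \cite{Dcoh}. So your argument can only be judged on its own terms, and its first two steps give a genuinely self-contained route. Those steps check out. The monomials $R^{n-3-|S|}V_S$, with $S$ a subgee, do span $H^{n-3}(\Mbar(\ell);\zt)\cong\zt$; your degree bookkeeping for multiplying $\cR_S$ in degree $n-3-|S'|$ by $V_{S'}$ is right (the hypothesis of Proposition \ref{gprop} becomes $|S|\ge|S'|+1$, which holds for $S'\subsetneq S$); the sets $\{U:U\cap S=S'\}$ for $S'\subsetneq S$ partition $\{U:U\not\supseteq S\}$, so summing the derived relations gives $\sum_{U\supseteq S}c_U=\Sigma$ for every subgee $S$; and since the subgees of a monogenic code form an order ideal in the Boolean lattice, the inversion is valid and $\Sigma\ne0$ forces $\Sigma=1$. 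The resulting intermediate statement, $R^{n-3}\equiv\#\{\text{subgees}\}\pmod 2$, is consistent with the rest of the paper: for $n$ odd it also follows from Corollary \ref{thm} and Proposition \ref{vfprop}, since the top Stiefel--Whitney number of a closed even-dimensional manifold is $\chi\bmod 2$ and $w_{n-3}=\tbinom{n-2}{n-3}R^{n-3}=R^{n-3}$ in that case.

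The genuine gap is the final step, which you yourself flag: the congruence $\#\{\text{subgees}\}\equiv\sum_B\prod_{i=1}^k\tbinom{a_i+b_i-2}{b_i}\pmod 2$ is where the specific formula of the Proposition actually lives, and you do not prove it; you only name two candidate strategies. Note that the two sides are not equal over $\Z$ (for the gee $\{3,2\}$ the subgee count is $7$ while the sum is $1$; for $\{4,2\}$ it is $10$ versus $2$), so a plain bijection is not available, and a sign-reversing involution or a verification that both sides satisfy the same recursion mod $2$ (say in $g_1$, splitting subgees according to whether $g_1$ is their largest element) has to be exhibited. Identities of exactly this flavor are what forced the author to invoke Zeilberger's algorithm in Section \ref{SWsec}, so this is not a step that can be waved through. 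As it stands, your argument proves the clean statement that $R^{n-3}$ is the parity of the number of subgees---arguably a more transparent formulation---but it does not yet prove the Proposition as stated.
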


One can tell from the genetic code whether or not  $\Mbar(\ell)$ has a nonzero vector field.
\begin{prop} \label{vfprop}Let $d_i$ denote the number of subgees of size $i$. Then an $n$-gon space $\Mbar(\ell)$ has a nonzero vector field iff $n$ is even or $\ds\sum_{i\ge0}(-1)^id_i=0$.\end{prop}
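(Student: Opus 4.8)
The plan is to reduce the existence of a nowhere-zero vector field on $\Mbar(\ell)$ to the vanishing of its Euler characteristic, and then to compute $\chi(\Mbar(\ell))$ from the cohomology presentation of Proposition \ref{gprop}. Since $\Mbar(\ell)$ is a closed connected smooth manifold, the Poincar\'e--Hopf theorem (which holds whether or not $\Mbar(\ell)$ is orientable) says $\Mbar(\ell)$ admits a nowhere-zero vector field if and only if $\chi(\Mbar(\ell))=0$. If $n$ is even then $\dim\Mbar(\ell)=n-3$ is odd, so $\chi(\Mbar(\ell))=0$ automatically and a nonzero vector field exists; thus it suffices to treat $n$ odd, where one must show $\chi(\Mbar(\ell))=\sum_{i\ge0}(-1)^id_i$.

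For that I would read off the mod $2$ Betti numbers from Proposition \ref{gprop}. The relations $V_i^2=RV_i$ reduce every monomial to the form $R^{d-|S|}V_S$, which vanishes unless $S$ is a subgee; since supersets of non-subgees are non-subgees this is consistent, so $H^d(\Mbar(\ell);\zt)$ is spanned by $\{R^{d-|S|}V_S:S$ a subgee, $|S|\le d\}$. Modulo these, the only further relations in degree $d$ are the relations $\cR_T$ of Proposition \ref{gprop} --- which are relations in degree $d$ exactly when $|T|\ge n-2-d$ --- together with their multiples by powers of $R$. The point is that in each degree these relations are linearly independent and account for all relations; this is part of the explicit description of $H^*(\Mbar(\ell);\zt)$ obtained in \cite{HK} and reorganized in \cite[Thm~2.1]{D3}, and is of a piece with the presentation-matrix computations of Section \ref{Rsec}, now carried out in every degree. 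Granting it,
$$\dim_{\zt}H^d(\Mbar(\ell);\zt)=\sum_{i\le d}d_i-\sum_{i\ge n-2-d}d_i\qquad(0\le d\le n-3),$$
equivalently the mod $2$ Poincar\'e polynomial is $P(t)=(1-t)^{-1}\sum_i d_i\bigl(t^i-t^{\,n-2-i}\bigr)$. As sanity checks, this is symmetric under $d\leftrightarrow n-3-d$ (Poincar\'e duality), gives $\dim H^{n-3}=1$, and reduces to $\binom{n-3}{d}$ when $\Mbar(\ell)$ is a torus.

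It then remains to evaluate $\chi(\Mbar(\ell))=\sum_d(-1)^d\dim_{\zt}H^d(\Mbar(\ell);\zt)=P(-1)$. A direct computation gives $P(-1)=\tfrac{1-(-1)^n}{2}\sum_i(-1)^id_i$ --- equivalently a short reindexing of the alternating sum that uses that $n-3$ is even, $n$ is odd, and $d_i=0$ for $i\ge n-2$. For $n$ odd this is $\sum_i(-1)^id_i$ (and for $n$ even it is $0$, which recovers the first case), so $\Mbar(\ell)$ has a nonzero vector field iff $n$ is even or $\sum_i(-1)^id_i=0$, as claimed. The one genuinely substantive step is the Betti-number formula --- i.e.\ that in each degree the crude ``generators minus relations'' count is exact --- which rests on \cite{HK}, \cite{D3}; everything else is Poincar\'e--Hopf together with the elementary sign computation. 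As a consistency check, Corollary \ref{thm} gives $w_{n-3}(\tau(\Mbar(\ell)))=\binom{n-2}{n-3}R^{n-3}$, so for $n$ odd $\chi(\Mbar(\ell))\equiv\langle R^{n-3},[\Mbar(\ell)]\rangle\pmod2$, which agrees with $\sum_i(-1)^id_i\equiv\sum_id_i\pmod2$.
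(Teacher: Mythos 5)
Your proposal is correct, but it reaches the Euler characteristic by a genuinely different route than the paper. The paper never touches the mod~$2$ Betti numbers of $\Mbar(\ell)$ itself: it uses multiplicativity of $\chi$ under the double cover $p:M(\ell)\to\Mbar(\ell)$, together with the fact (from \cite{Far} or \cite{Dor}) that the integral cohomology of $M(\ell)$ is free with ranks counted by subgees and their dual classes, so that for $n-3$ even $\chi(M(\ell))=2\sum(-1)^id_i$ and $\chi(\Mbar(\ell))=\tfrac12\chi(M(\ell))$. You instead compute the full mod~$2$ Poincar\'e polynomial of $\Mbar(\ell)$ from the presentation of Proposition \ref{gprop} and evaluate at $t=-1$; your formula $P(t)=(1-t)^{-1}\sum_id_i(t^i-t^{\,n-2-i})$ and the evaluation $P(-1)=\tfrac{1-(-1)^n}{2}\sum_i(-1)^id_i$ are both correct. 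The trade-off is that your route requires the stronger input that the ``generators minus relations'' count is exact in every degree, i.e.\ that the relations $\cR_T$ with $|T|\ge n-2-d$ are linearly independent in degree $d$; this does not follow formally from the presentation as stated in Proposition \ref{gprop} and has to be imported from the Betti-number computation in \cite{HK} (you correctly flag this as the substantive step). The paper's input --- the Betti numbers of the double cover $M(\ell)$ --- is cheaper and makes the argument two lines. On the other hand, your argument yields all the mod~$2$ Betti numbers of $\Mbar(\ell)$, not just $\chi$, and your sanity checks (Poincar\'e duality symmetry, the torus case, and the parity comparison with $w_{n-3}$ via Corollary \ref{thm}) are apt. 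Both proofs rest on Hopf's theorem in the same way, and both dispose of the $n$ even case by odd-dimensionality.
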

\begin{proof} We use the well-known result of Hopf that a connected manifold has a nonzero vector field iff its Euler characteristic is 0. If $n$ is even, the result follows since the Euler characteristic of an odd-dimensional manifold is 0. That the alternating sum of $d_i$'s gives the Euler characteristic of $\Mbar(\ell)$ appears as a remark at the end of Section 4 of \cite{HK}. We prove it by noting that (e.g., \cite[Thm 1.7]{Far} or \cite[Thm 2.3]{Dor}) the Betti numbers of the double cover $M(\ell)$ are given by counting subgees and their dual classes.
Thus, if $\dim(M(\ell))$ is even, $\chi(M(\ell))=2\sum(-1)^id_i$, and $\chi(\Mbar(\ell))=\frac12\chi(M(\ell))$.\end{proof}

We list the geetic codes  of the 30 cases with $n=7$ that have Euler characteristic 0. This is obtained from \cite{web}.
\begin{eqnarray*}&&1,21,31,41,51,61,321,421,521,621,431,4321,\{321,41\},\{321,51\},\{321,61\}\\
&&\{421,51\},\{421,61\},\{431,51\},\{431,61\},\{521,61\},\{32,4\},\{42,6\},\{32,41,5\},\{32,51,6\}\\
&&\{321,42,5\},\{321,43,6\},\{421,43,5\},\{421,52,6\},\{321,42,51,6\},\{421,43,51,6\}
\end{eqnarray*}
For example, $\{421,51\}$ has $d_0=1$, $d_1=5$, $d_2=6$ (21, 31, 41, 51, 32, 42), and $d_3=2$ (321, 421).
\section{Original proof of Corollary \ref{thm}}\label{SWsec}
As noted in the introduction, we obtained Corollary \ref{thm} prior to Theorem \ref{tanbdlthm}. In this section, we give that original proof.
  Throughout this section, we let $m=n-3=\dim(\Mbar(\ell))$
We use the following well-known relationship between the Stiefel-Whitney classes of the tangent bundle and the Wu classes. (e.g., \cite{MS})
\begin{prop}\label{Wu} Let $M$ be an $m$-manifold. The Wu class $v_i\in H^i(M;\zt)$ is defined to be the unique class which satisfies $v_i\cup x=\sq^i(x)$ for all $x\in H^{m-i}(M;\zt)$. Then
the total Stiefel-Whitney class, $w(\tau(M))$, of the tangent bundle of $M$ equals $\sq(v)$, where $\sq$ is the total Steenrod square and $v=\sum_{i=0}^{[m/2]}v_i$ is the total Wu class.\end{prop}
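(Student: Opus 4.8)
The statement is Wu's classical theorem, so the plan is to reproduce the standard proof (see \cite{MS}); I sketch it here. Throughout, $M$ is closed (as are the spaces $\Mbar(\ell)$) and all coefficients are $\zt$, so Poincar\'e duality makes the cup-product pairing $H^i(M)\times H^{m-i}(M)\to H^m(M)\cong\zt$ perfect. This is exactly what makes each $v_i$ well defined, and it supplies the basic strategy: to prove an equality of two classes in $H^*(M)$ it suffices to check that they pair the same way against every class of complementary degree. Note also that $v_i=0$ for $i>m/2$, since $\sq^i$ kills classes of degree $<i$, which is why $v=\sum_{i=0}^{[m/2]}v_i$ is a finite sum.

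First I would bring in the Thom-space machinery. Embed $M$ in a sphere $S^{m+k}$ with $k$ large, let $\nu$ be the rank-$k$ normal bundle, $\mathrm{Th}(\nu)$ its Thom space, $c:S^{m+k}\to\mathrm{Th}(\nu)$ the Pontryagin--Thom collapse map, $\phi:H^j(M)\to H^{j+k}(\mathrm{Th}(\nu))$ the Thom isomorphism, $\pi$ the bundle projection, and $u=\phi(1)$ the Thom class, so that $\phi(z)=\pi^*z\smile u$. The one external input is the Thom-space description of Stiefel-Whitney classes, $\sq^i(u)=\phi(w_i(\nu))$, equivalently $\sq(u)=\phi(w(\nu))$ (\cite{MS}); combined with $\tau(M)\oplus\nu\approx\vareps^{m+k}$ and the Whitney formula, this gives $w(\nu)=w(\tau(M))^{-1}$.

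The heart of the argument is the identity $\langle\sq(x)\smile w(\nu),[M]\rangle=\langle x,[M]\rangle$ for every $x\in H^*(M)$. To get it, put $\mu=c_*[S^{m+k}]$; by construction $\mu$ corresponds to $[M]$ under the homology Thom isomorphism, so $\langle\phi(z),\mu\rangle=\langle z,[M]\rangle$ for all $z$, and since $\mu$ comes from the fundamental class of a sphere, on whose cohomology positive Steenrod squares vanish, one also has $\langle\sq(a),\mu\rangle=\langle a,\mu\rangle$ (extracting the degree-$(m+k)$ part). Taking $a=\phi(x)$ and expanding by the Cartan formula, naturality, and $\sq(u)=\phi(w(\nu))$,
$$\sq(\phi(x))=\sq(\pi^*x\smile u)=\pi^*\sq(x)\smile\sq(u)=\phi(\sq(x)\smile w(\nu)),$$
so $\langle x,[M]\rangle=\langle\phi(x),\mu\rangle=\langle\sq(\phi(x)),\mu\rangle=\langle\sq(x)\smile w(\nu),[M]\rangle$. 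Now substitute $x=\overline{\sq}(y)$, where $\overline{\sq}$ is the inverse of the total square $\sq=1+\sq^1+\sq^2+\cdots$ (invertible, being $1$ plus a nilpotent operator on the finite-dimensional ring $H^*(M;\zt)$, and multiplicative as the inverse of a ring endomorphism). Since $\sq(\overline{\sq}(y))=y$, this yields $\langle w(\nu)\smile y,[M]\rangle=\langle\overline{\sq}(y),[M]\rangle$ for all $y$.

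To finish, I would show that $\sq(v)$ is the cup-inverse of $w(\nu)$. For any $y$, applying the last identity with $\sq(v)\smile y$ in place of $y$, then multiplicativity of $\overline{\sq}$ together with $\overline{\sq}\circ\sq=\mathrm{id}$, then the defining property of the $v_i$ (which gives $\langle v\smile z,[M]\rangle=\langle\sq(z),[M]\rangle$ for every $z$, on extracting degree-$m$ parts),
$$\langle\sq(v)\smile w(\nu)\smile y,[M]\rangle=\langle\overline{\sq}(\sq(v)\smile y),[M]\rangle=\langle v\smile\overline{\sq}(y),[M]\rangle=\langle\sq(\overline{\sq}(y)),[M]\rangle=\langle y,[M]\rangle.$$
By Poincar\'e duality this forces $\sq(v)\smile w(\nu)=1$, whence $w(\tau(M))=w(\nu)^{-1}=\sq(v)$. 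The step I expect to be the real obstacle is the single external input $\sq^i(u)=\phi(w_i(\nu))$: depending on how Stiefel-Whitney classes are set up this is either their definition or a separate theorem, and it is the one place where the geometry of vector bundles actually enters, everything afterward being formal manipulation with Poincar\'e duality, the Cartan formula, and the conjugation in the Steenrod algebra. A secondary nuisance is the bookkeeping of graded components in the pairings $\langle\,\cdot\,,[M]\rangle$, where only the degree-$m$ piece ever contributes, together with the standing assumption that $M$ is closed.
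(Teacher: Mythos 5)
This proposition is Wu's theorem; the paper offers no proof of it, simply citing Milnor--Stasheff \cite{MS}, and your write-up is a correct rendition of the standard argument from that source (Thom class of the normal bundle, Pontryagin--Thom collapse, Cartan formula, and the conjugation $\overline{\sq}$ to show $\sq(v)\smile w(\nu)=1$). Nothing to correct.
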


The following key lemma gives a surprisingly simple formula for  the Wu classes of $\Mbar(\ell)$.
\begin{lem}\label{Wulem} $v_i=\binom{m-i}i R^i$.\end{lem}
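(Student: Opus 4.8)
The plan is to compute the Wu classes of $\Mbar(\ell)$ directly from the presentation of $H^*(\Mbar(\ell);\zt)$ in Proposition \ref{gprop}, using the defining property $v_i\cup x=\sq^i(x)$ for all $x\in H^{m-i}(\Mbar(\ell);\zt)$. Since $H^*(\Mbar(\ell);\zt)$ is generated in degree $1$ by $R$ and the $V_j$'s, and $\sq$ is determined on degree-$1$ classes by $\sq^0=\mathrm{id}$, $\sq^1(y)=y^2$, the total Steenrod square of any monomial is computable via the Cartan formula. The first step is therefore to fix a monomial basis for $H^{m-i}(\Mbar(\ell);\zt)$ — by Proposition \ref{gprop} a spanning set is given by $R^{m-i}$ together with $R^{m-i-|S|}V_S$ for $S$ a nonempty subgee with $|S|\le m-i$ — and to check that $v_i=\binom{m-i}{i}R^i$ satisfies $v_i\cup x=\sq^i(x)$ on each such $x$. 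Because $H^m(\Mbar(\ell);\zt)$ is $1$-dimensional, verifying the identity amounts to showing that the two sides have the same coefficient of the top class (which we may take to be $R^m$, using that $R^{m-|S|}V_S$ is a scalar multiple of $R^m$).

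The computation splits according to whether $x$ is a power of $R$ or involves some $V_j$. For $x=R^{m-i}$ the Cartan formula gives $\sq(R)=R+R^2$, so $\sq^i(R^{m-i})=\binom{m-i}{i}R^{m}$, which matches $\binom{m-i}{i}R^i\cup R^{m-i}$; this already pins down the leading term and is the source of the binomial coefficient. The real work is the case $x=R^{m-i-|S|}V_S$ with $S=\{j_1,\dots,j_k\}$ a subgee: here $\sq(V_{j})=V_j+V_j^2=V_j+RV_j$ by the relation $V_j^2=RV_j$, so $\sq(V_S)=\prod_t(V_{j_t}+RV_{j_t})=\sum_{T\subseteq S}R^{|T|}V_S$, and combined with $\sq(R^{m-i-|S|})$ this yields $\sq^i(x)=\bigl(\sum_{a+b=i}\binom{m-i-|S|}{a}\binom{|S|}{b}\bigr)R^{i}\cdot x=\binom{m-i}{i}R^i x$ by Vandermonde's identity. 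So in fact the identity $\sq^i(x)=\binom{m-i}{i}R^i x$ holds for every monomial $x$ of degree $m-i$, not merely after projecting to the top — which is even cleaner than needed.

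There is one subtlety to address carefully: one must make sure that $\binom{m-i}{i}R^i$ is genuinely the Wu class, i.e. that it is the \emph{unique} class with the stated property, and that no lower- or mixed-degree correction term is forced by the relations $\cR_S$. Uniqueness is automatic from Poincar\'e duality (the pairing $H^i\otimes H^{m-i}\to H^m\cong\zt$ is nondegenerate), so it suffices to exhibit one class that works, and the monomial computation above does exactly that once we know the proposed formula is consistent with all relations — but since we only ever use $\sq$ applied to honest cohomology classes and the Cartan formula respects the ring structure, consistency is automatic. I expect the main obstacle to be purely bookkeeping: organizing the Vandermonde summation uniformly over all subgee monomials and being careful that the spanning set from Proposition \ref{gprop} really does let us reduce to checking cup products with $R^i$, rather than anything genuinely hard. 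Once Lemma \ref{Wulem} is in hand, Corollary \ref{thm} follows immediately from Proposition \ref{Wu}: $w(\tau(\Mbar(\ell)))=\sq(v)=\sq\bigl(\sum_i\binom{m-i}{i}R^i\bigr)$, and a generating-function manipulation with $\sq(R)=R(1+R)$ collapses this to $(1+R)^{n-2}$.
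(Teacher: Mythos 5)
Your proposal is correct and takes essentially the same route as the paper: both compute $\sq^i$ on the spanning monomials $R^{m-i-|S|}V_S$ using only the relations $V_a^2=RV_a$ (so that $\sq(R)=R(1+R)$ and $\sq(V_j)=V_j(1+R)$), apply Vandermonde to conclude that $\sq^i$ acts as multiplication by $\binom{m-i}{i}R^i$ on all of $H^{m-i}$, and then invoke the defining property of the Wu class. Your observation that the identity holds on the nose for every monomial, not just after pairing into $H^m$, is exactly the point the paper makes as well.
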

\begin{proof} For this result, all we need to know about $H^*(\Mbar(\ell);\zt)$ is that it is generated as an algebra by 1-dimensional classes $R,V_1,\ldots,V_{n-1}$ with relations $V_a^2=RV_a$.(\cite{HK})  There are additional relations, but we don't need them here. In general, for a product of 1-dimensional classes $x_j$,
$$\sq^i(x_1\cdots x_k)=x_1\cdots x_k\cdot\sum_{|S|=i}\prod_{j\in S}x_j,$$
where $S$ ranges over all $i$-subsets of $\{1,\ldots,k\}$. Using the relations $V_a^2=RV_a$, $H^{m-i}(\Mbar(\ell);\zt)$ is spanned by classes $R^{m-i-j}V_{a_1}\cdots V_{a_j}$, and $$\sq^i(R^{m-i-j}V_{a_1}\cdots V_{a_j})=\tbinom{m-i}iR^{m-j}V_{a_1}\cdots V_{a_j}.$$ These classes may be zero, depending on the other, more complicated relations, but still it is the case that $\sq^i$ acts as multiplication by $\binom{m-i}iR^i$.\end{proof}

Now we can prove Theorem \ref{thm}, using a combinatorial result proved below.
\begin{proof}[Proof of Corollary \ref{thm}] The first part of (\ref{eqs}) follows from Proposition \ref{Wu} and Lemma \ref{Wulem}, while the next-to-last $=$ is Corollary \ref{combcor}.
\begin{equation}\label{eqs}w(\tau(\Mbar(\ell)))=\sum_{j\ge0}\sq^j\sum_{i\ge0}\tbinom{m-i}iR^i=\sum_{k\le m}R^k\sum_i\tbinom{m-i}i\tbinom i{k-i}=\sum_{k\le m}\tbinom{m+1}kR^k=(1+R)^{m+1},\end{equation}
since $R^{m+1}=0$.
\end{proof}

In the remainder of this section, we prove
the mod-2 combinatorial result, Corollary \ref{combcor}, which was used in the above proof. This result and the integral combinatorial results, Lemma \ref{comblem}, Corollary \ref{combcor2}, and Theorem \ref{combthm}, which we use to derive it, are probably known, but we could not find them. Nor could we find a proof simpler than the rather elaborate proof that we present here.
We use the usual convention that $\binom mk=m(m-1)\cdots (m-k+1)/k!$ for any integer $m$ and nonnegative integer $k$.

\begin{lem}\label{comblem} If $m$ is an integer, and $k$ a  nonnegative integer, then
$$\sum_{i=0}^k\tbinom{m-i}i\tbinom{i-m+k}{k-i}=\sum_{i=0}^{k+2}\tbinom{m-i}i\tbinom{i-m+k+2}{k+2-i}.$$
\end{lem}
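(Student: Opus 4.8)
The plan is to prove the sharper statement that, for every integer $m\ge0$, the left-hand side $f(k):=\sum_{i=0}^{k}\binom{m-i}{i}\binom{i-m+k}{k-i}$ equals $\tfrac12\bigl(1+(-1)^{k}\bigr)$ whenever $0\le k\le m$, and then to deduce the identity by a degree argument. For a fixed $k$, both sides of the claimed equality are polynomials in $m$ — the left of degree $\le k$, the right of degree $\le k+2$ — since $\binom{m-i}{i}$ and $\binom{i-m+k}{k-i}$ are each polynomial in $m$. Hence it suffices to verify the equality for all integers $m\ge k+2$; for such $m$ both $k$ and $k+2$ lie in the range $\le m$, so $f(k)=\tfrac12(1+(-1)^{k})=\tfrac12(1+(-1)^{k+2})=f(k+2)$, and since two polynomials in $m$ agreeing at infinitely many integers coincide, the lemma follows for all $m$.

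To evaluate $f(k)$ for $0\le k\le m$, first rewrite $\binom{i-m+k}{k-i}=(-1)^{k-i}\binom{m-2i-1}{k-i}$ using $\binom{r}{j}=(-1)^{j}\binom{j-r-1}{j}$, so that $(-1)^{k}f(k)=\sum_{i=0}^{k}(-1)^{i}\binom{m-i}{i}\binom{m-2i-1}{k-i}$. I would then introduce the (finite) sum
$$g_m(x):=\sum_{i\ge0}(-1)^{i}\binom{m-i}{i}\,x^{i}(1+x)^{m-1-2i}=(1+x)^{m-1}\sum_{i\ge0}\binom{m-i}{i}\Bigl(\tfrac{-x}{(1+x)^{2}}\Bigr)^{i},$$
with the $\binom{m-i}{i}$ taken as ordinary binomial coefficients (so the index $i$ runs only to $\lfloor m/2\rfloor$), and observe that expanding each factor $(1+x)^{m-1-2i}$ and reading off the coefficient of $x^{k}$ reproduces exactly $(-1)^{k}f(k)$. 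The one point that needs care here is that the ordinary and the ``polynomial'' conventions for $\binom{m-i}{i}$ and $\binom{m-2i-1}{k-i}$ agree throughout the range of indices $i\le k\le m$ that actually occurs, so that this coefficient comparison is legitimate; this bit of bookkeeping is the only delicate step in the argument.

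The rest is a routine generating-function computation. The inner sum is the classical Fibonacci/Chebyshev polynomial $\sum_{i}\binom{m-i}{i}y^{i}=[z^{m}]\,(1-z-yz^{2})^{-1}$; with $y=-x/(1+x)^{2}$ the denominator becomes $(1+x)^{-2}\bigl(xz^{2}-(1+x)^{2}z+(1+x)^{2}\bigr)$, and the quadratic in $z$ has discriminant $(1+x)^{4}-4x(1+x)^{2}=(1-x^{2})^{2}$, a perfect square, hence factors as $(xz-(1+x))(z-(1+x))$. A partial-fraction expansion in $z$ then gives $[z^{m}](1-z-yz^{2})^{-1}=\dfrac{1-x^{m+1}}{(1+x)^{m}(1-x)}$, so that $g_m(x)=\dfrac{1-x^{m+1}}{1-x^{2}}$. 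Since this power series agrees with $(1-x^{2})^{-1}$ through degree $m$, extracting the coefficient of $x^{k}$ for $0\le k\le m$ yields $(-1)^{k}f(k)=[x^{k}](1-x^{2})^{-1}$, i.e.\ $f(k)=\tfrac12(1+(-1)^{k})$, completing the argument. (The identity could alternatively be verified mechanically via Zeilberger's algorithm, but this route makes the reason transparent.)
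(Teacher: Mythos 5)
Your argument is correct, and it takes a genuinely different route from the paper's. The paper proves Lemma \ref{comblem} by Zeilberger's algorithm: {\tt Maple} supplies a certificate $G(k,i)$ making $(k+2)(k-m+1)\bigl(f(k,i)-f(k+2,i)\bigr)$ telescope, the boundary term is checked symbolically, and the degenerate case $k=m+1$ is handled separately; the closed-form evaluation (Corollary \ref{combcor2}, that the sum is $1$ for $k$ even and $0$ for $k$ odd) is then \emph{deduced} from the lemma by induction on $k$. You invert that logical order: you establish the closed form directly for $0\le k\le m$ by a generating-function computation --- upper negation on the second factor, recognition of the inner sum as $[z^m](1-z-yz^2)^{-1}$ at $y=-x/(1+x)^2$, where the discriminant $(1-x^2)^2$ is a perfect square and everything factors, giving $g_m(x)=(1-x^{m+1})/(1-x^2)$ --- and then extend to all integers $m$ by noting that both sides of the lemma are polynomials in $m$ (of degrees $\le k$ and $\le k+2$) agreeing on the infinite set of integers $m\ge k+2$. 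The bookkeeping point you flag is indeed the only delicate spot, and it does work out: for $0\le i\le k\le m$ the upper index of $\binom{m-i}{i}$ is nonnegative, so the ordinary and polynomial conventions agree (and the terms with $i>m/2$ vanish in both), while the one surviving negative exponent $(1+x)^{m-1-2i}$, namely $i=m/2$ for $m$ even, is correctly read off as a power series, matching the convention $\binom{-1}{k-i}=(-1)^{k-i}$; the terms of $g_m$ with $i>k$ contribute nothing to $[x^k]$. What your approach buys is a self-contained, software-free proof that makes the ``reason'' visible and yields Corollary \ref{combcor2} for free for all integers $m$ (again by polynomiality in $m$), rendering the lemma an immediate consequence; what the paper's approach buys is brevity and mechanical certifiability, at the cost of delegating both the discovery of the certificate and a final algebraic verification to computer algebra.
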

\begin{proof} We use the {\tt Maple} program {\tt Zeil}, as described in \cite[ch.6]{AB}.\footnote{It is called {\tt ct} in \cite{AB}, but runs as {\tt Zeil} in our implementation.}
It discovers that if $f(k,i)=\binom{m-i}i\tbinom{i-m+k}{k-i}$ and
$$G(k,i)=\tbinom{m-i}i\tbinom{i-m+k}{k-i}\tfrac{(2k-m+3)(-m+i-1)i(2i-m)}{(k+1-i)(k+2-i)}$$ for $i\le k$,
then
\begin{equation}\label{zeq}(k+2)(k-m+1)(f(k,i)-f(k+2,i))=G(k,i+1)-G(k,i)\end{equation}
for $i\le k-1$.
(We verified this directly in many cases, but for a complete proof, we rely on the software.) Applying $\sum_{i=0}^{k-1}$ to (\ref{zeq}), we obtain
$$(k+2)(k-m+1)(\Delta-S)=G(k,k)-G(k,0),$$
where $\Delta$ is the difference (LHS minus RHS) of the two sums in our lemma, and
$$S=\tbinom{m-k}k(1-\tbinom{2k-m+2}2)-\tbinom{m-k-1}{k+1}\tbinom{2k-m+3}1-\tbinom{m-k-2}{k+2}.$$ We note that $G(k,0)=0$. We will show
\begin{equation}\label{star}-(k+2)(k-m+1)S=G(k,k),\end{equation}
which implies that $\Delta=0$, except perhaps if $k-m+1=0$. If $k-m+1=0$, then both sides of the lemma are easily seen to equal 1 if $k$ is odd, and 0 if $k$ is even.

To prove (\ref{star}), we factor out $\binom{m-k}k$, and then (\ref{star}) becomes
\begin{eqnarray*}&&(k+2)(m-k-1)\bigl(1-\tbinom{2k-m+2}2+\tfrac{(m-2k+1)(m-2k)(m-2k-3)}{(m-k)(k+1)}\\
&&-\tfrac{(m-2k)(m-2k-1)(m-2k-2)(m-2k-3)}{(k+2)(k+1)(m-k)(m-k-1)}\bigr)\\
&=&(2k-m+3)(-m+k-1)k(2k-m)/2,\end{eqnarray*}
which was verified symbolically by {\tt Maple}.
\end{proof}
\begin{cor}\label{combcor2}  If $m$ is an integer and $k$ a nonnegative integer, then
$$\sum_{i=0}^k\tbinom{m-i}i\tbinom{i-m+k}{k-i}=\begin{cases}1&k\text{ even}\\ 0&k\text{ odd.}\end{cases}$$
\end{cor}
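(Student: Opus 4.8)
The plan is to establish Corollary \ref{combcor2} as an immediate consequence of Lemma \ref{comblem}, by an induction on $k$ with base cases $k=0$ and $k=1$. Write $S(k) = \sum_{i=0}^k \binom{m-i}{i}\binom{i-m+k}{k-i}$ for the sum in question. Lemma \ref{comblem} is precisely the statement $S(k) = S(k+2)$ for all nonnegative integers $k$ (and all integers $m$), so the entire double family of values $\{S(k)\}_{k\ge 0}$ is determined by the two values $S(0)$ and $S(1)$. Thus the whole corollary reduces to the two base-case computations: $S(0) = 1$ and $S(1) = 0$.

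For the base cases, I would compute directly. When $k=0$ the only term is $i=0$, giving $\binom{m}{0}\binom{-m}{0} = 1\cdot 1 = 1$, so $S(0)=1$. When $k=1$ there are two terms, $i=0$ and $i=1$: the $i=0$ term is $\binom{m}{0}\binom{1-m}{1} = 1-m$, and the $i=1$ term is $\binom{m-1}{1}\binom{2-m}{0} = (m-1)\cdot 1 = m-1$; these sum to $0$, so $S(1)=0$. Then by Lemma \ref{comblem}, $S(k) = S(0) = 1$ for all even $k\ge 0$ and $S(k) = S(1) = 0$ for all odd $k\ge 1$, which is exactly the assertion of Corollary \ref{combcor2}.

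Since all the genuine work — the Zeilberger certificate and the symbolic \texttt{Maple} verification of the WZ-style identity \eqref{zeq} and of \eqref{star} — is already contained in the proof of Lemma \ref{comblem}, there is essentially no obstacle remaining here: the only thing that could go wrong is a sign or off-by-one slip in the two tiny base-case evaluations, and those are routine. If one wanted to avoid invoking \texttt{Maple} even for Lemma \ref{comblem}, the alternative would be to find a combinatorial or generating-function proof of $S(k)=S(k+2)$ directly (for instance, interpreting $S(k)$ via a coefficient extraction in a product of two binomial-series factors and showing the relevant rational function is annihilated appropriately), but that is a detour; the cleanest route is simply to cite Lemma \ref{comblem} and dispatch the two base cases.
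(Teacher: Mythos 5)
Your proposal is correct and matches the paper's own argument, which likewise verifies the cases $k=0$ and $k=1$ directly and then applies Lemma \ref{comblem} to step by $2$. The two base-case evaluations you give ($S(0)=1$ and $S(1)=(1-m)+(m-1)=0$) are accurate.
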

\begin{proof} We easily verify when $k=0$ and 1, and then apply Lemma \ref{comblem}.\end{proof}
\begin{thm}\label{combthm} If $d\ge k-m$, then
$$\sum_{i=0}^k\tbinom{m-i}i\tbinom{i+d}{k-i}=\sum_{j=0}^{[k/2]}\tbinom{m+d-1-2j}{k-2j}.$$
\end{thm}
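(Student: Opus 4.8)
The plan is to prove the identity
$$\sum_{i=0}^k\tbinom{m-i}i\tbinom{i+d}{k-i}=\sum_{j=0}^{[k/2]}\tbinom{m+d-1-2j}{k-2j}$$
by induction on $k$, using the algebraic relation among binomial coefficients that drives Lemma \ref{comblem}. Write $L_k(d)$ for the left-hand side and $R_k(d)$ for the right-hand side. First I would check the base cases $k=0$ and $k=1$ directly: $L_0(d)=1=R_0(d)$, and $L_1(d)=\binom{m}{1}+\binom{1+d}{1}=m+d+1=\binom{m+d}{1}=R_1(d)$, valid whenever $d\ge k-m$ permits the coefficients to be interpreted with the stated polynomial convention. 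The inductive step should reduce the $k$ case to the $k-1$ and $k-2$ cases. On the left, I would apply Pascal's rule to the factor $\binom{i+d}{k-i}=\binom{i+d-1}{k-i}+\binom{i+d-1}{k-i-1}$, which splits $L_k(d)$ into a piece resembling $L_k(d-1)$ and a piece resembling $L_{k-1}(d-1)$ (after reindexing); on the right, the analogous Pascal expansion $\binom{m+d-1-2j}{k-2j}=\binom{m+d-2-2j}{k-2j}+\binom{m+d-2-2j}{k-1-2j}$ splits $R_k(d)$ into $R_k(d-1)$ plus (most of) $R_{k-1}(d-1)$.

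The subtlety — and I expect this to be the main obstacle — is that this naive Pascal bookkeeping does not close up cleanly: the "$L_k(d-1)$-like" piece on the left is not literally $L_k(d-1)$ because the summation range and the top-index shift interact with the $\binom{m-i}{i}$ factor, and similarly on the right the telescoping in $j$ leaves a boundary term. So rather than a single Pascal step, I would instead mimic the structure of Lemma \ref{comblem} and Corollary \ref{combcor2}: set $d=k-m$ (the extreme allowed value) as an anchor, where Corollary \ref{combcor2} already evaluates $L_k(k-m)$ as $1$ for $k$ even and $0$ for $k$ odd, and check that $R_k(k-m)=\sum_j\binom{k-1-2j}{k-2j}$ equals the same thing (each term $\binom{k-1-2j}{k-2j}$ is $0$ unless $k-2j\le 1$, i.e. $j=[k/2]$, giving $1$ when $k$ even and $0$ when $k$ odd). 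Then I would show that both sides satisfy the same first-order recurrence in $d$, namely $F_k(d)-F_k(d-1)=F_{k-1}(d-1)$ — this is exactly the Pascal computation above, now used as a recurrence rather than a direct reduction, with the induction hypothesis on $k-1$ supplying $F_{k-1}(d-1)$ identically for both sides. Telescoping from the anchor $d=k-m$ up to arbitrary $d\ge k-m$ then forces $L_k(d)=R_k(d)$.

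The one genuinely delicate point is verifying $L_k(d)-L_k(d-1)=L_{k-1}(d-1)$: the Pascal split produces $\sum_i\binom{m-i}{i}\binom{i+d-1}{k-1-i}$, and one must confirm that reindexing this to match $L_{k-1}(d-1)=\sum_i\binom{m-i}{i}\binom{i+d-1}{k-1-i}$ requires no correction term — it is immediate as written, so in fact the recurrence for $L$ is transparent. The corresponding check for $R$ is the real computation: $R_k(d)-R_k(d-1)=\sum_{j=0}^{[k/2]}\binom{m+d-2-2j}{k-1-2j}$, and one must verify this equals $R_{k-1}(d-1)=\sum_{j=0}^{[(k-1)/2]}\binom{m+d-2-2j}{k-1-2j}$, i.e. that the top term $j=[k/2]$ contributes nothing when $k$ is even (where $k-1-2j=-1$, so the coefficient vanishes) and the ranges agree when $k$ is odd. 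This is a short finite check. Once both recurrences and the anchor values agree, induction on $k$ combined with telescoping in $d$ completes the proof; if one prefers, this can be packaged as an application of the Zeilberger-style certificate machinery already invoked for Lemma \ref{comblem}, but the inductive argument is elementary and self-contained.
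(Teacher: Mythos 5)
Your argument is correct and is essentially the paper's proof: the paper likewise applies Pascal's rule to $\binom{i+d}{k-i}$ to get $L_k(d)=L_k(d-1)+L_{k-1}(d-1)$, recombines the two resulting sums on the right-hand side by Pascal again, and anchors the induction at $m+d-k=0$ (your $d=k-m$) via Corollary \ref{combcor2}; your ``telescoping in $d$'' is exactly the paper's induction on $m+d-k$, nested with induction on $k$. (One minor slip: $L_1(d)=d+(m-1)=\binom{m+d-1}{1}=R_1(d)$, not $m+d+1$; but that base case is redundant given your anchor and recurrence, so nothing is affected.)
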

\begin{proof} The proof is by induction on $m+d-k$, and when this is fixed, induction on $k$. The theorem is valid when $m+d-k=0$ by Corollary \ref{combcor2} since $$\sum\tbinom{k-1-2j}{k-2j}=\begin{cases}1&k\text{ even}\\0&k\text{ odd.}\end{cases}$$ It is also valid when $k=0$, since both equal 1. Assume the result for smaller values. Then using Pascal's formula at the beginning and end, and the induction hypothesis in the middle, we have
\begin{eqnarray*}&&\sum\tbinom{m-i}i\tbinom{i+d}{k-i}=\sum\tbinom{m-i}i\biggl(\tbinom{i+d-1}{k-i}+\tbinom{i+d-1}{k-1-i}\biggr)\\
&=&\sum\biggr(\tbinom{m+d-2-2j}{k-2j}+\tbinom{m+d-2-2j}{k-1-2j}\biggr)=\sum\tbinom{m+d-1-2j}{k-2j}.\end{eqnarray*}
\end{proof}
\begin{cor}\label{combcor} If $m\ge k$, then
$$\sum_{i=0}^k\tbinom{m-i}i\tbinom i{k-i}\equiv\tbinom{m+1}k\pmod2.$$
\end{cor}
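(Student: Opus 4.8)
The plan is to derive Corollary~\ref{combcor} from Theorem~\ref{combthm} by specializing the parameter $d$ and then performing an elementary mod-$2$ evaluation. Since $m\ge k$, the hypothesis $d\ge k-m$ of Theorem~\ref{combthm} holds for $d=0$, so that theorem gives the integer identity
$$\sum_{i=0}^k\binom{m-i}i\binom i{k-i}=\sum_{j=0}^{[k/2]}\binom{m-1-2j}{k-2j}.$$
It therefore suffices to show that the right-hand side is congruent to $\binom{m+1}k$ modulo $2$.

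For the mod-$2$ step I would pass to the power series ring $\zt[[x]]$, where $\binom ab\equiv[x^b](1+x)^a$ for any integer $a$ and nonnegative integer $b$ (valid with the stated binomial convention, since $(1+x)^{-n}=\sum_b\binom{-n}b x^b$ over $\zt$, and harmless here because the only negative upper index that can occur is $\binom{-1}0=1$). Then
$$\sum_{j}\binom{m-1-2j}{k-2j}\equiv[x^k]\sum_{j\ge0}x^{2j}(1+x)^{m-1-2j}=[x^k]\,(1+x)^{m-1}\sum_{j\ge0}\Bigl(\tfrac{x^2}{(1+x)^2}\Bigr)^{\!j}\pmod 2,$$
where the geometric series is a legitimate element of $\zt[[x]]$ because $x^2/(1+x)^2$ has positive order (and, in any case, only $j\le[k/2]$ affect the coefficient of $x^k$, matching the original finite sum). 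Summing the series and using the identity $(1+x)^2+x^2=1+2x+2x^2=1$ in $\zt[[x]]$, one gets $\sum_{j\ge0}(x^2/(1+x)^2)^j=(1+x^2/(1+x)^2)^{-1}=(1+x)^2$, hence the expression equals $[x^k](1+x)^{m-1}(1+x)^2=[x^k](1+x)^{m+1}=\binom{m+1}k$ mod $2$, as desired.

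If one prefers to avoid power series, an alternative for the second step is an induction: both $f(m,k)=\sum_j\binom{m-1-2j}{k-2j}$ and $\binom{m+1}k$ satisfy the Pascal recursion $f(m,k)=f(m-1,k)+f(m-1,k-1)$ (for $f$, apply Pascal's formula to each summand and re-index), with base cases $k=0$ and $k=1$ immediate mod $2$, the boundary case $m=k$ being checked separately. I do not anticipate a real obstacle here: once Theorem~\ref{combthm} is available, all that remains is the congruence $(1+x)^2\equiv1+x^2$ over $\zt$, and the only points needing care are the binomial-coefficient conventions for negative upper arguments and the (innocuous) finiteness of the sums.
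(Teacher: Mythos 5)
Your proposal is correct. The first step—specializing Theorem \ref{combthm} to $d=0$ (legitimate since $m\ge k$) to rewrite the left side as $\sum_j\binom{m-1-2j}{k-2j}$—is exactly what the paper does. Where you diverge is in the finishing step: the paper observes that both $\sum_j\binom{m-1-2j}{k-2j}$ and $\binom{m+1}{k}$ satisfy Pascal's recursion and agree on the boundary cases $k=0$ and $k=m$ (where the sum is $1$ and $\equiv m+1$ respectively), and concludes by induction; you instead sum a geometric series in $\zt[[x]]$, using $(1+x)^2=1+x^2$ over $\zt$ to collapse $\sum_j x^{2j}(1+x)^{m-1-2j}$ to $(1+x)^{m+1}$ and read off the coefficient of $x^k$. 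Your generating-function route is slightly slicker and makes the mod-$2$ mechanism transparent, and you correctly handle the two points that need care: the extension of the sum to all $j\ge0$ costs nothing in the coefficient of $x^k$, and the only negative upper index that can arise is $\binom{-1}{0}=1$ (which occurs when $k=m$ with $m$ even and matches $[x^0](1+x)^{-1}$). Your proposed alternative via the Pascal recursion, with base cases at $k=0$ and the diagonal $m=k$, is essentially the paper's argument verbatim. Either version is a complete proof.
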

\begin{proof} By Theorem \ref{combthm}, the LHS equals $\ds\sum_j\tbinom{m-1-2j}{k-2j}$. This equals 1 if $k=0$ and is $\equiv m+1\ (2)$ if $k=m$. Both $\ds\sum_j\tbinom{m-1-2j}{k-2j}$ and the RHS satisfy Pascal's formula, and they agree when $k=0$ or $m$. Hence they are equal.\end{proof}

 \def\line{\rule{.6in}{.6pt}}

\end{document}